\newcommand\precdot{\mathrel{\ooalign{$\prec$\cr
  \hidewidth\raise0.001ex\hbox{$\cdot\mkern0.6mu$}\cr}}}
\newcommand{\R}{\mathbb{R}}
\newtheorem{theorem}{Theorem}[section]
\newtheorem{prop}[theorem]{Proposition}
\newtheorem{conj}[theorem]{Conjecture}
\newtheorem{lemma}[theorem]{Lemma}
\newtheorem{claim}[theorem]{Claim}
\newtheorem{cor}[theorem]{Corollary}
\theoremstyle{definition}
\newtheorem{ex}[theorem]{Example}
\newtheorem{quest}[theorem]{Question}
\newtheorem{defin}[theorem]{Definition}
\theoremstyle{remark}
\newtheorem{remark}{Remark}
\DeclareMathOperator{\Red}{Red}
\DeclareMathOperator{\diam}{diam}
\DeclareMathOperator{\ro}{ro}
\DeclareMathOperator{\sort}{sort}
\DeclareMathOperator{\rro}{rro}
\DeclareMathOperator{\id}{id}
\newcommand{\mc}{\mathcal}
\renewcommand{\R}{\mathbb{R}}
\title{Diameters of graphs of reduced words and rank-two root subsystems}
\author{Christian Gaetz}
\thanks{C.G. was supported by a National Science Foundation Graduate Research Fellowship under Grant No. 1122374.}
\address{Department of Mathematics, Massachusetts Institute of Technology, Cambridge, MA 02139}
\email{\href{mailto:crgaetz@gmail.com}{{\tt crgaetz@gmail.com}}}
\author{Yibo Gao}
\email{\href{mailto:gaoyibo@mit.edu}{{\tt gaoyibo@mit.edu}}}
\date{\today}
\begin{document}
\begin{abstract}
We study the diameter of the graph $G(w)$ of reduced words of an element $w$ in a Coxeter group $W$ whose edges correspond to applications of the Coxeter relations. We resolve conjectures of Reiner--Roichman \cite{reiner-roichman} and Dahlberg--Kim \cite{dahlberg-kim} by proving a tight lower bound on this diameter when $W=S_n$ is the symmetric group and by characterizing the equality cases. We also give partial results in other classical types which illustrate the limits of current techniques.
\end{abstract}

\maketitle

\section{Introduction}
Given an element $w$ in a Coxeter group $W$, its set $\Red(w)$ of reduced words is very well-studied and important in a variety of algebraic, combinatorial, and geometric contexts. There is a natural graph structure $G(w)$ on $\Red(w)$, with two reduced words connected by an edge whenever they differ by a single application of one of the defining Coxeter relations
\[
\underbrace{s_is_j \cdots}_{m_{ij}} = \underbrace{s_js_i \cdots}_{m_{ij}}.
\]

A foundational result of Tits \cite{tits-words} says that the graph $G(w)$ is always connected. Further important work by Stanley \cite{stanley-reduced-words}, Reiner \cite{reiner-degree}, Tits \cite{tits-local}, and many others studied the cardinality, average degree, and topology of these graphs.

For finite Coxeter groups $W$, the diameter of $G(w)$ was first studied asymptotically by Dehornoy--Autord \cite{dehornoy-autord} and then exactly by Reiner--Roichman \cite{reiner-roichman} and Dahlberg--Kim \cite{dahlberg-kim}. Reiner and Roichman's insight was to study distances and diameters in $G(w)$ in relation to a certain set $L_2(w)$ of codimension-two subspaces in the associated reflection hyperplane arrangement. Intuitively, these subspaces are potential geometric obstructions to transforming one reduced word (viewed as a geodesic between chambers of the arrangement) into another. Our first main result resolves a conjecture of Reiner--Roichman by establishing a tight lower bound on the diameter of $G(w)$ in terms of $|L_2(w)|$ in the case of the symmetric group.

\begin{theorem}[Conjectured by Reiner--Roichman \cite{reiner-roichman}]
\label{thm:type-A-lower-bound}
For any permutation $w$ in the symmetric group $S_n$ we have:
\begin{equation}
\label{eq:main-thm-statement}
\frac{1}{2}\left| L_2(w) \right| \leq \diam(G(w)).
\end{equation}
\end{theorem}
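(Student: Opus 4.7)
I would assign to each reduced word $\bs{w} \in \Red(w)$ and each $X \in L_2(w)$ a \emph{local orientation} $\sigma_X(\bs{w}) \in \{\pm 1\}$, and bound $\diam(G(w))$ below by the Hamming distance of these orientation profiles. Recall that $X \in L_2(w)$ corresponds to a rank-two parabolic subgroup $W_X$ whose reflections all lie in $\Inv(w)$. In type $A$, $W_X$ is either of type $A_2$ (coming from a $321$-patterned triple $\{i<j<k\}$) or of type $A_1 \times A_1$ (two disjoint inverted pairs). By a classical theorem on reflection orders, the reflections of $W_X$ appear in the reflection sequence of $\bs{w}$ in one of exactly two convex (dihedral) orders---for $A_2$, this forces the non-simple root to lie between the two simple ones, so only $2$ of the $6$ orderings arise---and $\sigma_X(\bs{w})$ records this binary choice.

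The first step is to verify that each single application of a Coxeter relation flips exactly one coordinate of $\sigma(\bs{w})$. A commutation move swaps two adjacent commuting reflections $t_a, t_{a+1}$: only the $A_1 \times A_1$ spanned by $\{t_a, t_{a+1}\}$ has its orientation reversed, as for any other $X$ the relative order of its reflections in $\bs{w}$ is unchanged. A braid move reverses three consecutive reflections $t_a, t_{a+1}, t_{a+2}$ inside an $A_2$ subsystem $X_0$, flipping $\sigma_{X_0}$ only: any two of the three fail to commute, so they lie together in no other $X$, and external reflections keep their side relative to each of the three. This yields
\[
d(\bs{w}_1, \bs{w}_2) \ \geq\ d_H\bigl(\sigma(\bs{w}_1), \sigma(\bs{w}_2)\bigr),
\]
hence $\diam(G(w)) \geq \max_{\bs{w}_1, \bs{w}_2} d_H(\sigma(\bs{w}_1), \sigma(\bs{w}_2))$.

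The main obstacle is bounding this Hamming diameter below by $\tfrac{1}{2}|L_2(w)|$. My first approach would be an averaging argument: with $\bs{w}_1, \bs{w}_2$ drawn i.i.d.\ uniformly from $\Red(w)$, one computes
\[
\mathbb{E}\bigl[d_H(\sigma(\bs{w}_1), \sigma(\bs{w}_2))\bigr]\ =\ \sum_{X \in L_2(w)} 2 p_X (1 - p_X),
\]
where $p_X$ is the fraction of reduced words with $\sigma_X = +1$. If one establishes $p_X = 1/2$ for every $X$, the expectation equals $\tfrac{1}{2}|L_2(w)|$ and $\max \geq \mathrm{mean}$ closes the proof. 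Proving this balance is the hard step; I would try to construct, for each $X$, a fixed-point-free involution $\phi_X : \Red(w) \to \Red(w)$ with $\sigma_X \circ \phi_X = -\sigma_X$, perhaps by a canonical procedure that transports the reflections of $W_X$ to adjacent positions and applies the local Coxeter move inside $W_X$. Verifying this involution is well defined should lean on type-$A$-specific tools such as wiring diagrams, in keeping with the paper's remark that these techniques do not currently extend to other classical types. If instead balance fails, the fallback is to construct an explicit extremal pair (such as lexicographically minimal and maximal reduced words) and verify directly that their orientations disagree on at least half of $L_2(w)$.
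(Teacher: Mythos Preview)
Your first step is correct and in fact recovers exactly the inequality $d(r_1,r_2)\geq |L_2(r_1,r_2)|$ that the paper takes as its starting point: the Hamming distance $d_H(\sigma(r_1),\sigma(r_2))$ is precisely $|L_2(r_1,r_2)|$.

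The main step, however, has a genuine gap. The balance claim $p_X=\tfrac12$ is false, and no fixed-point-free involution $\phi_X$ can exist in general. Take $w=3412$: there are only two reduced words, $s_2s_1s_3s_2$ and $s_2s_3s_1s_2$, and for the $A_1\times A_1$ subsystem $X=\{e_4-e_1,\,e_3-e_2\}$ both give the \emph{same} orientation (in both root orderings $e_3-e_2$ precedes $e_4-e_1$). Hence $p_X\in\{0,1\}$ and the averaging bound yields only $\mathbb{E}[d_H]=\tfrac12<\tfrac12|L_2(w)|=1$. This is not an isolated accident: every occurrence of the pattern $3412$ produces such an ``unflippable'' $A_1\times A_1$ subsystem (indeed, this is the intuition behind the paper's Conjecture~\ref{conj:3412}), so the obstruction is systematic.

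Your fallback---an explicit extremal pair---is the right instinct and is what the paper actually does, but the paper's analysis shows why a \emph{single} pair is not enough on its own. For the natural pair $r_1,r_2$ (sort $1$ leftward, $2$ leftward, \dots\ versus sort $n$ rightward, $n{-}1$ rightward, \dots) one has $|L_2(r_1,r_2)|=\sum_p a_p N_p(w)$ and $|L_2(w)|=\sum_p b_p N_p(w)$, with coefficients tabulated by length-three and length-four patterns; the patterns $4132$ and $3241$ satisfy $a_p=0<\tfrac12 b_p$, so the direct comparison fails. The missing idea is to exploit the symmetry $\diam(G(w))=\diam(G(w^{-1}))$: averaging the bound for $w$ and for $w^{-1}$ replaces $a_p$ by $\tfrac12(a_p+a_{p^{-1}})$, and one then checks from the table that $a_p+a_{p^{-1}}\geq b_p$ for every pattern (in particular $4132^{-1}=4213$ and $3241^{-1}=2431$ each have $a_{p^{-1}}=1$). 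Neither the averaging argument nor a single explicit pair sees this inversion trick.
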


\begin{remark}
Reiner and Roichman also conjecture a similar lower bound on $\diam(G(w))$ in type $B_n$ and upper bounds on $\diam(G(w))$ in both types. These conjectures remain open (see Sections~\ref{sec:other-types} and \ref{sec:upper-bounds-D} for discussion and partial results).
\end{remark}

Given permutations $w=w_1\ldots w_n$ and $u=u_1 \ldots u_m$ written in one-line notation, the \emph{direct sum} $w \oplus u$ is the permutation in $S_{n+m}$ with one-line notation $w_1\ldots w_n (n+u_1) \ldots (n+u_m)$ and the \emph{skew sum} $w \ominus u$ is the permutation $(n+w_1)\ldots (n+w_n) u_1 \ldots u_m$. We write $\id_n$ for the identity permutation in $S_n$.

Our second main result characterizes the equality case in (\ref{eq:main-thm-statement}), resolving a conjecture of Dahlberg--Kim.

\begin{theorem}[Conjectured by Dahlberg--Kim \cite{dahlberg-kim}]
\label{thm:type-A-equality-cases}
Equality is achieved in (\ref{eq:main-thm-statement}) if and only if $w$ is of the form $\id_i \oplus (\id_j \ominus \id_k) \oplus \id_{\ell}$ for some $i,j,k,\ell$.
\end{theorem}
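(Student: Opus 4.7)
The plan is to prove each direction of the biconditional separately, guided by the observation that $w$ has the form $\id_i\oplus(\id_j\ominus\id_k)\oplus\id_{\ell}$ exactly when $w$ is \emph{bigrassmannian} or the identity, i.e.\ when the non-fixed positions of $w$ form a contiguous interval on which $w$ swaps two adjacent blocks while preserving their internal order. Equivalently, both $w$ and $w^{-1}$ have at most one descent. This reformulation lets me split the proof by whether $w$ has at most one descent on each side.

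For the sufficiency (``if'') direction, fix such a $w$. Every inversion has its smaller coordinate in $\{i+1,\dots,i+j\}$ and its larger coordinate in $\{i+j+1,\dots,i+j+k\}$, so no triple of positions is pairwise inverted and the only contributions to $L_2(w)$ come from pairs of disjoint inversions. A direct count gives $|L_2(w)|=2\binom{j}{2}\binom{k}{2}$, and Theorem~\ref{thm:type-A-lower-bound} then yields $\diam(G(w))\ge\binom{j}{2}\binom{k}{2}$. For the matching upper bound I would exploit the classical bijection between $\Red(\id_j\ominus\id_k)$ and standard Young tableaux of rectangular shape $j\times k$, under which braid and commutation moves correspond to simple local moves on tableaux, and bound the diameter of the induced graph on tableaux by $\binom{j}{2}\binom{k}{2}$, for instance by fixing a canonical (say, row-superstandard) tableau and inducting on a monotone statistic such as the sum of entry-positions.

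For the necessity (``only if'') direction, suppose $w$ is not of the given form. After stripping off any ``padding'' identities at the ends (which affects neither $|L_2(w)|$ nor $\diam(G(w))$) we may assume $w$ is not bigrassmannian, so $w$ or $w^{-1}$ has at least two descents. I would then locate a small subpattern certifying this failure: a $321$-pattern in $w$ produces a three-coincidence element of $L_2(w)$, while each of the patterns $2143$, $2413$, $3142$ produces a pair of $L_2$-elements sharing a common obstruction. Opening up the proof of Theorem~\ref{thm:type-A-lower-bound}, I would quantify the resulting slack in (\ref{eq:main-thm-statement}) to conclude strict inequality, likely by exhibiting two specific reduced words of $w$ whose graph-distance exceeds $\tfrac{1}{2}|L_2(w)|$.

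The main obstacle is the strict inequality in the ``only if'' direction: the argument has to go inside the Reiner--Roichman-style lower bound to see where it becomes slack. I expect the crucial fact to be that a geodesic of length exactly $\tfrac{1}{2}|L_2(w)|$ would have to ``cross'' every element of $L_2(w)$ precisely once, and that a $321$-triple or a pair of nested/separated descent-pairs forces some $L_2$-element to be crossed at least twice along every such path. Making this multiplicity statement quantitative for every non-bigrassmannian pattern, and assembling the local witnesses into a global strict inequality, is the crux of the proof.
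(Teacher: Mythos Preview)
Your proposal correctly isolates the four patterns $321$, $2143$, $2413$, $3142$ and has the right overall shape, but the mechanism you propose for the strict inequality in the ``only if'' direction is wrong, and this is the heart of the proof.

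The paper's argument is immediate once one revisits the chain of inequalities from the proof of Theorem~\ref{thm:type-A-lower-bound}:
\[
\diam(G(w)) \;\geq\; \tfrac{1}{2}\sum_p (a_p+a_{p^{-1}})N_p(w) \;\geq\; \tfrac{1}{2}\sum_p b_p N_p(w) \;=\; \tfrac{1}{2}|L_2(w)|.
\]
The middle inequality is strict as soon as $N_p(w)>0$ for some $p$ with $a_p+a_{p^{-1}}>b_p$, and by reading the table these $p$ are exactly the four patterns above (all other strict cases, such as $4321$ or $4231$, already contain $321$). That is the entire ``slack'' argument. Your proposed intuition---that a geodesic of length $\tfrac{1}{2}|L_2(w)|$ would have to cross every element of $L_2(w)$ exactly once, and that certain patterns force some element to be crossed twice---does not reflect how the bound works. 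The lower bound on $d(r_1,r_2)$ comes from $|L_2(r_1,r_2)|$, not from $|L_2(w)|$; there is no reason a path in $G(w)$ should touch every rank-two subsystem, and the notions ``three-coincidence element'' and ``common obstruction'' you invoke have no evident meaning here. So as written, your plan for strictness would not go through; the actual argument is a one-line coefficient check, not a geometric crossing argument.

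Two smaller points. First, for the ``if'' direction the paper simply cites Dahlberg--Kim; your tableau sketch may well be made to work, but the bound $\diam \leq \binom{j}{2}\binom{k}{2}$ on the graph of rectangular standard Young tableaux is asserted rather than proved. Second, you need the combinatorial equivalence ``$w$ avoids $\{321,2143,2413,3142\}$ $\Longleftrightarrow$ $w$ is of the form $\id_i\oplus(\id_j\ominus\id_k)\oplus\id_\ell$''. The paper obtains this from the structure of separable permutations (avoiding $2413$ and $3142$ gives an iterated $\oplus/\ominus$ decomposition, and then $321$- and $2143$-avoidance pins down the shape). Your bigrassmannian reformulation can also be made to work---two descents in $w$ force a $321$, $2143$, or $3142$ at the four descent positions, and two descents in $w^{-1}$ force $321$, $2143$, or $2413$---but this argument is missing from your proposal.
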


Section~\ref{sec:background} gives needed definitions and background on hyperplane arrangements, root systems and Weyl groups, and permutation patterns. Section~\ref{sec:symmetric-group} applies a new argument in terms of permutation patterns in order to prove Theorems~\ref{thm:type-A-lower-bound} and \ref{thm:type-A-equality-cases}; in Section~\ref{sub:3412-conjecture} we conjecture a new \emph{upper} bound on $\diam(G(w))$ expressed in terms of permutation patterns which strengthens conjectures from \cite{dahlberg-kim, reiner-roichman}. Section~\ref{sec:other-types} discusses the applicability of our methods to proving lower bounds in the other classical types. In type $B_n$ we use these methods to prove a new lower bound, but one which is weaker than the conjectured value; in type $D_n$ we give an example to show that no analogous lower bound exists. Finally, in Section~\ref{sec:upper-bounds-D} we study $G(w_0)$ in type $D_n$. We show that Reiner and Roichman's technique for calculating the diameter of $G(w_0)$ cannot work in this case, but we prove an upper bound on $\diam(G(w_0))$ which agrees with the predicted value up to leading order.

\section{Background}
\label{sec:background}
\subsection{Hyperplane arrangements}\label{sub:background-hyperplane}
We mainly follow the conventions in \cite{reiner-roichman}. Readers are also referred to \cite{Stanley-hyperplane} for a detailed exposition on hyperplane arrangements.

Let $\mathcal{A}=\{H_1,\ldots,H_N\}$ be a hyperplane arrangement in $\R^n$ that is \textit{central} and \textit{essential} (that is, $\bigcap_{i=1}^N H_i=\{0\}$). For $d=0,1,\ldots,n$, let $L_d$ be the set of codimension-$d$ subspaces that are intersections of the hyperplanes in $\mathcal{A}$ and let $\mathcal{C}$ be the set of chambers of $\mathcal{A}$. Two chambers are \textit{adjacent} if they are separated by exactly one hyperplane, and this adjacency gives rise to a graph $G_1$ with vertices $\mathcal{C}$ and edges between adjacent chambers.

A \textit{gallery} or \textit{geodesic} between chambers $c$ and $c'$ is a shortest path in the graph $G_1$. Let $\mathcal{R}(c,c')$ denote the set of galleries from $c$ to $c'$. For any intersection subspace $X\in L_d$, the \textit{localized arrangement} is a hyperplane arrangement in the quotient space $\R^d/X$, defined as \[\mathcal{A}_X\coloneqq\{H/X\:|\: H\in\mathcal{A}, H\supset X\}.\] Any gallery $r\in\mathcal{R}(c,c')$ descends naturally to a gallery $r/X\in\mathcal{R}(c/X,c'/X)$. For two galleries $r,r'\in\mathcal{R}(c,c')$, we say that a codimension-two subspace $X\in L_{2}$ \textit{separates} $r$ and $r'$, if $r/X\neq r'/X$. Given either a pair $c,c'$ of chambers or a pair $r,r'$ of galleries between the same pair of chambers, the associated \emph{separation sets} are:
\begin{align*}
L_2(c,c')&\coloneqq \{X \in L_2 \: | \: \text{ $c/X,c'/X$ are antipodal chambers in $\mc{A}_X$} \},\\
L_2(r,r')&\coloneqq \{X\in L_2\:|\: X\text{ separates }r\text{ and }r'\}.
\end{align*}
Define an undirected graph $G(c,c')$, whose vertex set is $\mathcal{R}(c,c')$, with an edge between two galleries $r$ and $r'$ if $|L_2(r,r')|=1$. For $r,r'\in\mathcal{R}(c,c')$, write $d(r,r')$ for the distance between $c$ and $c'$ in $G(c,c')$. From the definition it is clear that $d(r,r')\geq|L_2(r,r')|$. 
\begin{defin}\label{def:L2-accessible}
A gallery $r\in\mathcal{R}(c,c')$ is $L_2$-\textit{accessible} if $d(r,r')=|L_2(r,r')|$ for all $r'\in\mathcal{R}(c,c')$.
\end{defin}

Since $\mathcal{A}$ is central, the linear map $x\mapsto -x$ preserves our hyperplane arrangement. Thus, for each chamber $c\in\mathcal{C}$, there is an \textit{opposite chamber} $-c\in\mathcal{C}$. And for each gallery $r\in\mathcal{R}(c,-c)$, there is an \textit{opposite gallery} $-r$ which visits the opposite chambers in the reverse order. By considering two-dimensional central and essential quotient arrangements $\mathcal{A}/X$, we see that $L_2(r,-r)=L_2$ for $r\in\mathcal{R}(c,-c)$, and that $L_2(r,r')\Delta L_2(r,r'')=L_2(r',r'')$ where $\Delta$ denotes the symmetric difference (see also Section 3 of \cite{reiner-roichman}) for $r,r',r''\in\mathcal{R}(c,c')$.

The \emph{diameter} $\diam(G(c,c'))$ of the graph $G(c,c')$ is the maximum distance between two vertices. The following lemma is very helpful.

\begin{lemma}[Proposition 3.12 of \cite{reiner-roichman}]
\label{lem:accessible-implies-equality}
If there exists an $L_2$-accessible gallery $r\in \mathcal{R}(c,-c)$, then $\diam(G(c,-c))=|L_2|$.
\end{lemma}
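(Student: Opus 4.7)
My plan is to prove the equality $\diam(G(c,-c)) = |L_2|$ by separate lower and upper bounds. The lower bound is immediate and does not require accessibility: applying the inequality $d(\cdot,\cdot)\geq |L_2(\cdot,\cdot)|$ noted before Definition~\ref{def:L2-accessible} to the pair $r,-r$ gives $d(r,-r)\geq |L_2(r,-r)|=|L_2|$, so $\diam(G(c,-c))\geq |L_2|$.

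For the upper bound, I would bound $d(r',r'')$ for an arbitrary pair $r',r''\in\mathcal{R}(c,-c)$ by running two triangle inequalities — one through $r$ and one through $-r$ — and averaging. From the symmetric-difference identity $L_2(r,r'')\Delta L_2(r,-r)=L_2(-r,r'')$ combined with $L_2(r,-r)=L_2$, I get $L_2(-r,r'')=L_2\setminus L_2(r,r'')$, so $|L_2(-r,r'')|=|L_2|-|L_2(r,r'')|$. Granting for the moment that $-r$ is also $L_2$-accessible, we then have
\begin{align*}
d(r',r'') &\leq d(r',r)+d(r,r'') = |L_2(r,r')|+|L_2(r,r'')|, \\
d(r',r'') &\leq d(r',-r)+d(-r,r'') = 2|L_2|-|L_2(r,r')|-|L_2(r,r'')|.
\end{align*}
Adding these and dividing by two gives $d(r',r'')\leq |L_2|$, so $\diam(G(c,-c))\leq |L_2|$.

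It remains to verify that $-r$ is $L_2$-accessible. The map $r'\mapsto -r'$ is an involutive automorphism of the graph $G(c,-c)$: it is induced by the linear automorphism $x\mapsto -x$ of $\R^n$, which preserves the hyperplane arrangement and fixes every $X\in L_2$ setwise (each such $X$ being a linear subspace), so it sends each single-$X$ move to a single-$X$ move. This automorphism preserves both $d(\cdot,\cdot)$ and $L_2(\cdot,\cdot)$, which lets me transfer accessibility from $r$ to $-r$: for any $r''$, $d(-r,r'')=d(r,-r'')=|L_2(r,-r'')|=|L_2(-r,r'')|$, invoking accessibility of $r$ at the middle step. The one point that genuinely needs care is this symmetry verification; the rest of the argument is a clean averaging of two triangle inequalities against the symmetric-difference identity already recorded in the paper.
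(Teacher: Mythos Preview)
Your argument is correct and is exactly the approach underlying the cited result: the paper does not give its own proof of this lemma, but its proof of Corollary~\ref{cor:D4-diameter-upper-bound} explicitly follows ``ideas as in the proof of Proposition~3.12 in \cite{reiner-roichman}'' and uses precisely the same skeleton---two triangle inequalities through $r$ and $-r$, the identity $|L_2(x,r)|+|L_2(x,-r)|=|L_2|$, and averaging. Your additional verification that $r'\mapsto -r'$ is a graph automorphism (hence $-r$ is also $L_2$-accessible) is the right way to justify the use of $-r$; the same symmetry is used implicitly in the paper's Corollary~\ref{cor:D4-diameter-upper-bound}.
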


Following Reiner--Roichman, we will be interested in $\diam(G(c,c'))$ when $\mc{A}$ is the \emph{Coxeter arrangement} for a finite Coxeter group (see Section~\ref{sub:background-rootsystem}).

\subsection{Root systems and Weyl groups}\label{sub:background-rootsystem}

Let $\Phi\subset E$ be a finite crystallographic root system of rank $n$, where $E$ is an ambient Euclidean space of dimension $n$ (see \cite{humphreys} for basic definitions). Choose $\Phi^+\subset\Phi$ to be a set of positive roots whose corresponding simple roots are $\Delta=\{\alpha_1,\ldots,\alpha_n\}\subset\Phi^+$.  For each $\alpha\in\Phi^+$, let $s_{\alpha}$ be the reflection across the hyperplane normal to $\alpha$, and for simplicity, write $s_i$ for the simple reflections $s_{\alpha_i}$. Let $W=W(\Phi)\subset\mathrm{GL}(E)$ be the Weyl group associated to $\Phi$, the group generated by the $s_i$ for $i=1,\ldots,n$.

For each $w\in W$, denote its Coxeter length by $\ell(w)$, its set of reduced words by $\Red(w)$, and its \textit{inversion set} by
\[
I_{\Phi}(w)\coloneqq\{\alpha\in\Phi^+\:|\: w\alpha \not\in\Phi^+\}.
\]
We write $w_0=w_0(\Phi)$ for the unique element of $W$ of maximum length.

We adopt the following conventions for root systems of classical types, where $e_i$ denotes the $i$-th standard basis vector in $\R^n$:
\begin{itemize}
\item Type $A_{n-1}$: $\Phi=\{e_i-e_j\:|\:1\leq i\neq j\leq n\}$, $\Phi^+=\{e_j-e_i\:|\: 1\leq i<j\leq n\}$, $\Delta=\{\alpha_1=e_2-e_1,\ldots,\alpha_n=e_{n}-e_{n-1}\}$, $W\simeq S_n$ the symmetric group.
\item Type $B_n$: $\Phi=\{\pm e_i\pm e_j\:|\: i\neq j\}\cup\{\pm e_i\}$, $\Phi^+=\{e_j-e_i\:|\: i<j\}\cup \{e_i+e_j \: | \: 1 \leq i < j \leq n\} \cup \{e_i \: | \: 1 \leq i \leq n\}$, $\Delta=\{\alpha_1=e_1,\alpha_2=e_2-e_1,\ldots,\alpha_n=e_n-e_{n-1}\}$, $W(B_n)$ is the signed symmetric group:
\[
W(B_n)=\{w\text{ permutation of }-n,\ldots,-1,1,\ldots,n\:|\: w(i)=-w(-i),\forall i\}.
\]
\item Type $D_n$: $\Phi=\{\pm e_i\pm e_j\:|\: i\neq j\}$, $\Phi^+=\{e_j-e_i\:|\: i<j\} \cup \{e_j+e_i \: | \: i<j \}$, $\Delta=\{\alpha_1=e_1+e_2,\alpha_2=e_2-e_1,\alpha_3=e_3-e_2,\ldots,\alpha_n=e_n-e_{n-1}\}$, $W(D_n)$ is an index-two subgroup of $W(B_n)$:
\[
W(D_n)=\{w\in W(B_n)\:|\: w(i)<0\text{ for an even number of }i>0\}.
\]
\end{itemize}
The Dynkin diagrams corresponding to the above conventions are shown in Figure~\ref{fig:Dynkin}.
\begin{figure}[ht]
\centering
\begin{tikzpicture}[scale=0.6]
\node at (0,0) {$\bullet$};
\node[below] at (0,0) {$1$};
\node at (1,0) {$\bullet$};
\node[below] at (1,0) {$2$};
\node at (2,0) {$\bullet$};
\node[below] at (2,0) {$3$};
\node at (3.5,0) {$\bullet$};
\node at (4.5,0) {$\bullet$};
\node at (5.5,0) {$\bullet$};
\node[below] at (5.5,0) {$n{-}1$};
\draw(0,0)--(2,0);
\draw(3.5,0)--(5.5,0);
\node at (2.75,0) {$\cdots$};
\end{tikzpicture}
\quad
\begin{tikzpicture}[scale=0.6]
\node at (0,0) {$\bullet$};
\node[below] at (0,0) {$1$};
\node at (1,0) {$\bullet$};
\node[below] at (1,0) {$2$};
\node at (2,0) {$\bullet$};
\node[below] at (2,0) {$3$};
\node at (3.5,0) {$\bullet$};
\node at (4.5,0) {$\bullet$};
\node[below] at (4.5,0) {$n{-}1$};
\node at (5.5,0) {$\bullet$};
\node[below] at (5.5,0) {$n$};
\draw(0,0.05)--(1,0.05);
\draw(0,-0.05)--(1,-0.05);
\draw(1,0)--(2,0);
\draw(3.5,0)--(5.5,0);
\node at (2.75,0) {$\cdots$};
\end{tikzpicture}
\quad
\begin{tikzpicture}[scale=0.6]
\node at (0,0) {$\bullet$};
\node[below] at (0,0) {$3$};
\node at (1,0) {$\bullet$};
\node[below] at (1,0) {$4$};
\node at (1.75,0) {$\cdots$};
\node at (2.5,0) {$\bullet$};
\node at (3.5,0) {$\bullet$};
\node[below] at (3.5,0) {$n$};
\draw(0,0)--(1,0);
\draw(2.5,0)--(3.5,0);
\node at (-0.9,0.4) {$\bullet$};
\node at (-0.9,-0.4) {$\bullet$};
\node[left] at (-0.9,0.4) {$1$};
\node[left] at (-0.9,-0.4) {$2$};
\draw(-0.9,0.4)--(0,0)--(-0.9,-0.4);
\end{tikzpicture}
\caption{Dynkin diagrams of types $A_{n-1}$, $B_n$, and $D_n$.}
\label{fig:Dynkin}
\end{figure}
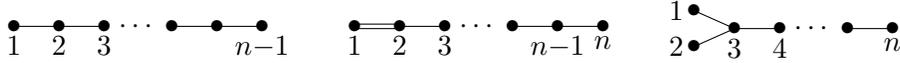

For simplicity of notation, when working in types $B$ and $D$ we will use $\bar i$ to represent $-i$, and write $e_{\bar i}$ for $-e_i$. For a signed permutation $w\in W(B_n)$, its one-line notation is written as $w(1)w(2)\cdots w(n)$. For example, $w=32\bar 14\in W(B_4)$ means that $w(1)=3$, $w(2)=2$, $w(3)=-1$ so that $w(-3)=1$ and $w(4)=4$. 

\begin{defin}
For a reduced word $r=s_{i_1}\cdots s_{i_{\ell}}\in\Red(w)$, its \textit{root ordering} is an ordering $\ro(r)=\beta_1,\ldots,\beta_{\ell}$ of $I_{\Phi}(w)$ where $\beta_j=s_{i_1}\cdots s_{i_{j-1}}\alpha_j\in\Phi^+$. The \textit{reverse root ordering} is $\rro(r)=\beta_{\ell},\ldots,\beta_1$, which will be used more frequently in this paper.
\end{defin}
Here is an alternative way of thinking about the root ordering given a reduced word $r=s_{i_1}\cdots s_{i_{\ell}}$. Let $w^{(j)}=s_{i_1}\cdots s_{i_{j}}$ where $w^{(0)}=\id$ and $w^{(\ell)}=w$. Then $\beta_j$ is the unique positive root $\beta\in\Phi^+$ such that $s_{\beta}w^{(j)}=w^{(j-1)}$. In the classical types, the reduced word $r$, read in reverse, records how $w$ is sorted step by step to $\id$ by swapping adjacent indices, while the reverse root ordering records the values being swapped.

\begin{ex}
\label{ex:type-A-reduced-word}
Consider $w=3412\in W(A_3)$ and pick a reduced word $r=s_2s_1s_3s_2\in\Red(w)$. We compute its root ordering to be $\ro(r)=e_3-e_2,e_3-e_1,e_4-e_2,e_4-e_1$. Viewing $r$ as describing how $w$ is reduced to $\id$ (as $ws_2s_3s_1s_2=\id$), we naturally obtain the reverse root ordering:
\[
\begin{tikzcd}
w=3412 \arrow[r, "e_4-e_1"]& 3142 \arrow[r, "e_4-e_2"] & 3124 \arrow[r, "e_3-e_1"] & 1324 \arrow[r, "e_3-e_2"] & 1234.
\end{tikzcd}
\]
\end{ex}
\begin{ex}
Consider $w=4\bar13\bar2\in W(D_4)$ with a reduced word $r=s_1s_3s_4s_3s_2$. Similarly, its reverse root ordering can be easily obtained by considering how $w$ is transformed into $\id$ by $r$:
\[
\begin{tikzcd}
w=4\bar13\bar2 \arrow[r, "e_4+e_1"]& \bar143\bar2 \arrow[r, "e_4-e_3"] & \bar134\bar2 \arrow[r, "e_4+e_2"] & \bar13\bar24 \arrow[r, "e_3+e_2"] & \bar1\bar234 \arrow[r, "e_2+e_1"] & 1234.
\end{tikzcd}
\]
For example, in the first step in the above diagram, which corresponds to the last simple reflection $s_2$ (where $\alpha_2=e_2-e_1$) in $r$, the value $4$ is swapped with $\bar1$ and thus the corresponding positive root is viewed as $e_4-e_{\bar1}=e_4+e_1$.
\end{ex}

Given a root system $\Phi$ of rank $n$, its \textit{Coxeter arrangement} is: \[\mathcal{A}_{\Phi}=\{H_{\alpha} \text{ for }\alpha\in\Phi^+\},\]
where $H_{\alpha}\coloneqq\{x\in\R^n\:|\: \langle x,\alpha\rangle=0\}$ is the hyperplane normal to $\alpha$.

The Weyl group acts simply transitively on the chambers of $\mc{A}_{\Phi}$, giving a labelling of the chambers by elements of $W$. For $w\in W$, we define $G(w)$ to be $G(\id,w)$ (or, isomorphically, $G(c,wc)$ for any $c \in W$) as in Section~\ref{sub:background-hyperplane}. In a straightforward correspondence, the inversion set $I_{\Phi}(w)$ becomes the set of hyperplanes that separate the chambers $\id$ and $w$. Moreover, reduced words $\Red(w)$ correspond to galleries $\mathcal{R}(\id,w)$ and the root ordering $\ro(r)$ corresponds to the sequence of hyperplanes crossed by the gallery $r$ from the identity chamber $\id$ to $w$. Under this identification, $G(w)$ can alternatively be described as the graph with vertex set $\Red(w)$ and edges connecting reduced words which differ by a single application of a relation
\[
\underbrace{s_is_j \cdots}_{m_{ij}} = \underbrace{s_js_i \cdots}_{m_{ij}},
\]
called a \emph{Coxeter move}. When $m_{ij}=2$, such a move is called a \emph{commutation move}.

Again identifying chambers of $\mc{A}_{\Phi}$ with elements of $W$, we write $L_2(w)$ for the set $L_2(\id,w)$ from Section~\ref{sub:background-hyperplane}. A \emph{root subsystem} $\Psi$ of $\Phi$ is a set of roots $\Phi \cap E'$ where $E'$ is a vector subspace of $E$; this subsystem is a root system in its own right and comes equipped with a natural choice $\Psi^+=\Phi^+ \cap E'$ of positive roots. The separation sets can be understood in terms of root subsystems as follows:
\begin{lemma}\label{lem:L2-reduced-words} \text{}
\begin{itemize}
    \item[(a)] For $w \in W$, $|L_2(w)|$ is the number of rank-two root subsystems $\Psi$ of $\Phi$ with $\Psi^+ \subset I_{\Phi}(w)$.
    \item[(b)] For $r_1,r_2\in\Red(w)$, $|L_2(r_1,r_2)|$ is the number of rank-two root subsystems $\Psi$ of $\Phi$ with $\Psi^+ \subset I_{\Phi}(w)$ such that the roots in $\Psi^+$ appear in different orders in $\ro(r_1)$ and $\ro(r_2)$.
\end{itemize}
\end{lemma}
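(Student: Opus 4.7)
The plan is to base both parts on a natural bijection between $L_2$ and the rank-two root subsystems of $\Phi$. Since $\mc{A}_{\Phi}$ is central and essential, each $X \in L_2$ is determined by $\Psi := \Phi \cap X^\perp$, which is itself a rank-two root subsystem; conversely, for any rank-two root subsystem $\Psi$ of $\Phi$ the perpendicular $X := \mathrm{span}(\Psi)^\perp$ lies in $L_2$. Under this identification, the localized arrangement $\mc{A}_X$ is naturally the rank-two Coxeter arrangement of $\Psi$, with hyperplanes indexed by $\Psi$.

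For part (a), I would show that $X \in L_2(w)$ iff $\Psi^+ \subset I_{\Phi}(w)$. By definition $X \in L_2(w)$ means $\id/X$ and $w/X$ are antipodal in $\mc{A}_X$, which happens iff every hyperplane of $\mc{A}_X$ separates them; as noted in the excerpt, the hyperplanes separating $\id$ from $w$ in $\mc{A}_{\Phi}$ are precisely $\{H_\alpha : \alpha \in I_{\Phi}(w)\}$, so antipodality is equivalent to $\Psi^+ \subset I_{\Phi}(w)$.

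For part (b), fix $X \leftrightarrow \Psi$. If $\Psi^+ \not\subset I_{\Phi}(w)$, then by (a) the chambers $\id/X, w/X$ are not antipodal in $\mc{A}_X$ and are therefore joined by a unique gallery, forcing $r_1/X = r_2/X$; such $X$ contribute to neither side. If instead $\Psi^+ \subset I_{\Phi}(w)$, then $\id/X, w/X$ are antipodal in the rank-two arrangement $\mc{A}_X$; between antipodal chambers of any rank-two central arrangement there are exactly two galleries, and they traverse the hyperplanes in reverse orders. Since the quotient gallery $r_i/X$ crosses the hyperplanes $H_\alpha$ (for $\alpha \in \Psi$) in exactly the order the corresponding roots appear in $\ro(r_i)$, we conclude that $r_1/X \neq r_2/X$ iff the induced orderings of $\Psi^+$ in $\ro(r_1)$ and $\ro(r_2)$ differ; summing over $X \in L_2$ yields (b).

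The main step requiring care is the identification of the quotient arrangement $\mc{A}_X$ with the rank-two Coxeter arrangement of $\Psi$, together with the verification that the subsequence of $\ro(r_i)$ lying in $\Psi^+$ records the sequence of hyperplane crossings of $r_i/X$. Given this dihedral reduction, the rest is an elementary two-dimensional picture; but reducing the global statement to a local rank-two computation is the substantive content of the lemma, and is what makes separation sets tractable in terms of root subsystems throughout the paper.
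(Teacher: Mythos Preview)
Your proposal is correct and follows essentially the same approach as the paper: set up the bijection $X \leftrightarrow \Psi$ between codimension-two intersection subspaces and rank-two root subsystems, identify $\mc{A}_X$ with the rank-two Coxeter arrangement of $\Psi$, and read off both statements from the two-dimensional picture. The paper's own proof is in fact terser than yours---it simply cites \cite{reiner-roichman} for part (a) and gives a one-sentence version of your argument for (b)---so your explicit treatment of the antipodality criterion and of the non-antipodal case (where the quotient gallery is unique) only adds detail rather than changing the route.
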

\begin{proof}
Part (a) appears in \cite{reiner-roichman}. For part (b), viewing $r_1$ and $r_2$ as galleries between chambers $\id$ and $w$, we recall from Section~\ref{sub:background-hyperplane} that a codimension-two subspace $X$ lies in $L_2(r_1,r_2)$ if and only if $r_1/X\neq r_2/X$. A codimension-two subspace $X$ corresponds to a rank-two root subsystem $\Psi\subset\Phi$ since we can write $X=\bigcap_{\alpha \in \Psi} H_{\alpha}$. So $r_1/X\neq r_2/X$ is equivalent to the orders in which $r_1$ and $r_2$ cross the hyperplanes corresponding to $\Phi^+$ differing, which is the same as the roots in $\Psi^+$ appearing in different orders in $\ro(r_1)$ and $\ro(r_2)$.
\end{proof}

\subsection{Permutation patterns}\label{sub:patterns}
Given a permutation $w=w_1 \ldots w_n \in S_n$ and another $p=p_1 \ldots p_k \in S_k$, we say $w$ \emph{contains an occurrence of $p$} in the positions $1 \leq i_1 < \cdots < i_k \leq n$ if $w_{i_1},\ldots,w_{i_k}$ are in the same relative order as $p_1,\ldots,p_k$. We write $N_p(w)$ for the number of occurrences of $p$ in $w$ and say that $w$ \emph{avoids} $p$ if $N_p(w)=0$.

\begin{prop}
\label{prop:inverse-patterns}
Let $w \in S_n$ and $p \in S_k$, then 
\[
N_p(w)=N_{p^{-1}}(w^{-1}).
\]
\end{prop}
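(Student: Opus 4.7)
The plan is to exhibit an explicit bijection between occurrences of $p$ in $w$ and occurrences of $p^{-1}$ in $w^{-1}$, from which the equality of counts follows immediately. The conceptual engine is the standard observation that taking the inverse of a permutation swaps the roles of positions and values: viewing $w$ as its graph $\{(i, w(i)) : i \in [n]\}$, the graph of $w^{-1}$ is obtained by reflecting across the diagonal $y = x$, sending each point $(i, w(i))$ to $(w(i), i)$.

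Concretely, I would define the bijection to send a $k$-element set of positions $S = \{i_1 < \cdots < i_k\}$ giving an occurrence of $p$ in $w$ to the $k$-element set $w(S) = \{w(i_1), \ldots, w(i_k)\}$, regarded as a set of positions in $w^{-1}$. This is clearly an involution up to the swap $w \leftrightarrow w^{-1}$ and $p \leftrightarrow p^{-1}$, since $(w^{-1})^{-1} = w$, $(p^{-1})^{-1} = p$, and the map $S \mapsto w(S)$ is a bijection of $\binom{[n]}{k}$ with itself whose inverse is $T \mapsto w^{-1}(T)$.

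What remains is to verify that $S$ realizes the pattern $p$ in $w$ if and only if $w(S)$ realizes the pattern $p^{-1}$ in $w^{-1}$. This is a short unpacking: writing $\{v_1 < \cdots < v_k\} = w(S)$, the condition that $w(i_1), \ldots, w(i_k)$ has relative order $p$ reads $w(i_j) = v_{p(j)}$ for every $j$, which, on applying $w^{-1}$ and substituting $a = p(j)$, becomes $w^{-1}(v_a) = i_{p^{-1}(a)}$ for every $a$. Since $v_1 < \cdots < v_k$ and $i_1 < \cdots < i_k$, this last statement is precisely the assertion that the subword of $w^{-1}$ at positions $v_1 < \cdots < v_k$ realizes the pattern $p^{-1}$. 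I do not expect any genuine obstacle; the only thing to be careful about is consistent bookkeeping of the one-line-notation convention, so that $p(j)$ (rather than $p^{-1}(j)$) records the rank of the $j$-th entry.
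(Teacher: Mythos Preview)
Your proposal is correct and follows essentially the same approach as the paper. The paper's proof is a one-sentence assertion that $(i_1,\ldots,i_k)$ is an occurrence of $p$ in $w$ if and only if $\sort(w_{i_1},\ldots,w_{i_k})$ is an occurrence of $p^{-1}$ in $w^{-1}$; your map $S \mapsto w(S)$ is exactly this bijection, and you have written out the ``easy check'' that the paper omits.
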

\begin{proof}
An easy check shows that $(i_1,\ldots,i_k)$ is an occurrence of $p$ in $w$ if and only if $\sort(w_{i_1},\ldots,w_{i_k})$ is an occurrence of $p^{-1}$ in $w^{-1}$, where $\sort$ is the rearrangement of a tuple into increasing order.
\end{proof}

Simion \cite{simion} introduced a notion of patterns in elements of $W(B_n)$ and $W(D_n)$ generalizing permutation patterns. This will be discussed briefly in Section~\ref{sub:type-B} but the details will not be needed.

\section{The symmetric group case}
\label{sec:symmetric-group}

Our strategy for proving lower bounds on $\diam(G(w))$ is as follows. First, we choose distinguished reduced words $r_1,r_2 \in \Red(w)$ which are in some sense opposite one another. From the definitions this gives a bound
\begin{equation}
\label{eq:diam-bound-from-r1r2}
\diam(G(w)) \geq d(r_1,r_2) \geq \left| L_2(r_1,r_2) \right|.
\end{equation}
Next, we analyze the contributions to $L_2(w)$ and to $L_2(r_1,r_2)$ coming from rank-two subsystems according to their appearances in occurrences of permutation patterns in $w$. This gives formulas:
\begin{align}
\label{eq:L2-bound-setup1}
    \left| L_2(r_1,r_2) \right| &= \sum_p a_p N_p(w), \\
\label{eq:L2-bound-setup2}
    \left| L_2(w) \right| &= \sum_p b_p N_p(w),
\end{align}
where the sums are over certain length three and four permutation patterns $p$ and where the coefficients $a_p,b_p$ do not depend on $w$. Comparing (\ref{eq:L2-bound-setup1}) and (\ref{eq:L2-bound-setup2}) term by term and applying the symmetry of the problem under inversion of $w$, we obtain Theorem~\ref{thm:type-A-lower-bound}.

\subsection{Reiner and Roichman's conjecture}

Throughout this section, for any $w \in W=S_n$ we consider two distinguished reduced words $r_1, r_2 \in \Red(w)$. Viewing reduced words as sequences of swaps taking $w$ to $\id$ as in Example~\ref{ex:type-A-reduced-word}, these are defined as follows:
\begin{itemize}
    \item $r_1$ is the reduced word which first swaps the value 1 leftward as many times as possible to move it to the first position, and then does the same to move the value 2 to the second position, and so on with the values $3,4,\ldots, n$.
    \item $r_2$ is the reduced word which first swaps the value $n$ rightward as many times as possible to move it to the last position, and then does the same to move the value $n-1$ to the penultimate position, and so on with the values $n-2,n-3,\ldots,1$.
\end{itemize}

\begin{figure}[h!]
\centering
\begin{tabular}{|c|c|c|c|c|c|}
\hline 
Pattern $p$ & $A_1 \times A_1$ subsystem $\Psi$ & $\rro(r_1)$ & $\rro(r_2)$ & $a_p$ & $b_p$ \\ \hline
$4321$ & $\R (e_2{-}e_1) \oplus \R (e_4{-}e_3)$  & $e_2{-}e_1$, $e_4{-}e_3$ & $e_4{-}e_3$, $e_2{-}e_1$ & 2 & 3 \\
 & $\R (e_3{-}e_1) \oplus \R (e_4{-}e_2)$  & $e_3{-}e_1$, $e_4{-}e_2$ & $e_4{-}e_2$, $e_3{-}e_1$ & & \\
 & $\R (e_4{-}e_1) \oplus \R (e_3{-}e_2)$  & $e_4{-}e_1$, $e_3{-}e_2$ & $e_4{-}e_1$, $e_3{-}e_2$ & & \\
\hline
$4312$ & $\R (e_4{-}e_1) \oplus \R (e_3{-}e_2)$  &  $e_4{-}e_1$, $e_3{-}e_2$ & $e_4{-}e_1$, $e_3{-}e_2$ & 1 & 2 \\
& $\R (e_3{-}e_1) \oplus \R (e_4{-}e_2)$  & $e_3{-}e_1$, $e_4{-}e_2$ & $e_4{-}e_2$, $e_3{-}e_1$ & & \\
\hline
$4231$ & $\R (e_3{-}e_1) \oplus \R (e_4{-}e_2)$  & $e_3{-}e_1$, $e_4{-}e_2$ & $e_4{-}e_2$, $e_3{-}e_1$ & 2 & 2 \\
& $\R (e_2{-}e_1) \oplus \R (e_4{-}e_3)$  & $e_2{-}e_1$, $e_4{-}e_3$ & $e_4{-}e_3$, $e_2{-}e_1$ & & \\
\hline
$4213$ & $\R (e_2{-}e_1) \oplus \R (e_4{-}e_3)$  & $e_2{-}e_1$, $e_4{-}e_3$ & $e_4{-}e_3$, $e_2{-}e_1$ & 1 & 1 \\
\hline
$4132$ & $\R (e_4{-}e_1) \oplus \R (e_3{-}e_2)$  & $e_4{-}e_1$, $e_3{-}e_2$ & $e_4{-}e_1$, $e_3{-}e_2$ & 0 & 1 \\
\hline
$3421$ & $\R (e_3{-}e_1) \oplus \R (e_4{-}e_2)$  & $e_3{-}e_1$, $e_4{-}e_2$ & $e_4{-}e_2$, $e_3{-}e_1$ & 1 & 2 \\
& $\R (e_4{-}e_1) \oplus \R (e_3{-}e_2)$  & $e_4{-}e_1$, $e_3{-}e_2$ & $e_4{-}e_1$, $e_3{-}e_2$ & & \\
\hline
$3412$ & $\R (e_3{-}e_1) \oplus \R (e_4{-}e_2)$  & $e_3{-}e_1$, $e_4{-}e_2$ & $e_4{-}e_2$, $e_3{-}e_1$ & 1 & 2 \\
& $\R (e_4{-}e_1) \oplus \R (e_3{-}e_2)$  & $e_4{-}e_1$, $e_3{-}e_2$ & $e_4{-}e_1$, $e_3{-}e_2$ & & \\
\hline
$3241$ & $\R (e_4{-}e_1) \oplus \R (e_3{-}e_2)$  & $e_4{-}e_1$, $e_3{-}e_2$ & $e_4{-}e_1$, $e_3{-}e_2$ & 0 & 1 \\
\hline
$3142$ & $\R (e_3{-}e_1) \oplus \R (e_4{-}e_2)$  & $e_3{-}e_1$, $e_4{-}e_2$ & $e_4{-}e_2$, $e_3{-}e_1$ & 1 & 1 \\
\hline
$2431$ & $\R (e_2{-}e_1) \oplus \R (e_4{-}e_3)$  & $e_2{-}e_1$, $e_4{-}e_3$ & $e_4{-}e_3$, $e_2{-}e_1$ & 1 & 1 \\
\hline
$2413$ & $\R (e_2{-}e_1) \oplus \R (e_4{-}e_3)$  & $e_2{-}e_1$, $e_4{-}e_3$ & $e_4{-}e_3$, $e_2{-}e_1$ & 1 & 1 \\
\hline
$2143$ & $\R (e_2{-}e_1) \oplus \R (e_4{-}e_3)$  & $e_2{-}e_1$, $e_4{-}e_3$ & $e_4{-}e_3$, $e_2{-}e_1$ & 1 & 1 \\
\hline
\end{tabular}
\caption{Some permutation patterns $p$ and rank-two subsystems $\Psi$ contained in their inversion sets. The induced reverse root orders corresponding to $r_1$ and $r_2$ are shown, allowing for the calculation of the coefficients $a_p$ and $b_p$. See the proof of Proposition~\ref{prop:table-gives-coefficients}.}
\label{fig:type-A-patterns}
\end{figure}

\begin{prop}
\label{prop:table-gives-coefficients}
With $r_1,r_2$ as above, the coefficients $a_p, b_p$ in (\ref{eq:L2-bound-setup1}) and (\ref{eq:L2-bound-setup2}) are the following:
\begin{itemize}
    \item[(i)] $a_{321}=b_{321}=1$,
    \item[(ii)] $a_p,b_p$ are given by the values in Figure~\ref{fig:type-A-patterns} for the twelve length-four patterns $p$ shown there, and
    \item[(iii)] $a_p=b_p=0$ for all other patterns $p$.
\end{itemize}
\end{prop}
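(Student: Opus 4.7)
My plan is to reduce everything to a pattern-by-pattern check by first classifying the rank-two root subsystems of $\Phi = A_{n-1}$ and then tracking, for each one, its contribution to $|L_2(w)|$ and $|L_2(r_1,r_2)|$ via Lemma~\ref{lem:L2-reduced-words}. Every rank-two subsystem is either of type $A_2$---the three roots $\{e_b-e_a, e_c-e_b, e_c-e_a\}$ attached to a triple $a<b<c$---or of type $A_1\times A_1$ attached to a quadruple $x_1<x_2<x_3<x_4$, in which case there are exactly three possible configurations: the split $\{e_{x_2}-e_{x_1}, e_{x_4}-e_{x_3}\}$, the interleaved $\{e_{x_3}-e_{x_1}, e_{x_4}-e_{x_2}\}$, and the nested $\{e_{x_4}-e_{x_1}, e_{x_3}-e_{x_2}\}$. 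Because the condition $\Psi^+\subset I_\Phi(w)$ and the relative order of the roots of $\Psi^+$ inside $\rro(r_i)$ each depend only on the relative order of the underlying 3 or 4 values in $w$, the contribution of each subsystem is a function purely of a length-three or length-four permutation pattern, which is precisely the decomposition in (\ref{eq:L2-bound-setup1})--(\ref{eq:L2-bound-setup2}).

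The key technical tool is a \emph{phase lemma} describing $r_1$ and $r_2$ explicitly. In $r_1$, the reduced word breaks into phases $v=1,2,\ldots,n-1$, where phase $v$ bubbles value $v$ leftward into position $v$; symmetrically, $r_2$ breaks into phases $v=n,n-1,\ldots,2$, where phase $v$ bubbles $v$ rightward into position $v$. The crucial invariant, immediate from the fact that each phase merely bubbles a single value past the others via adjacent transpositions, is that phase $v$ preserves the relative order among all values distinct from $v$. Consequently the root $e_a-e_b$ (with $a>b$) appears in phase $b$ of $r_1$ and in phase $a$ of $r_2$, and within a single phase the swaps happen in right-to-left order of the values being bubbled past. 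For any $A_1\times A_1$ subsystem the within-phase ordering is irrelevant, since the two roots have distinct smaller values and distinct larger values and so live in different phases of both $r_1$ and $r_2$; the within-phase ordering is only needed in the $A_2$ analysis.

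Applying the phase lemma: the $A_2$ subsystem on $a<b<c$ satisfies $\Psi^+\subset I_\Phi(w)$ iff the three values form a $321$-pattern, giving $b_{321}=1$; a direct computation then shows that $\rro(r_1)$ restricts to $(e_b-e_a, e_c-e_a, e_c-e_b)$ while $\rro(r_2)$ restricts to the reverse sequence, so the orderings differ and $a_{321}=1$. For the three $A_1\times A_1$ configurations, the split and interleaved always produce opposite orderings in $r_1$ versus $r_2$ (the roles of the two relevant phases reverse between $r_1$ and $r_2$), while the nested configuration always produces the same ordering in both: in $r_1$, $e_{x_4}-e_{x_1}$ is earlier because $x_1<x_2$, and in $r_2$ it is earlier because $x_4>x_3$. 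Thus $b_p$ equals the number of the three configurations for which both roots are inversions of $p$, while $a_p$ equals the number of just the split and interleaved configurations for which this holds. The proof concludes with a straightforward finite enumeration over the 24 length-four patterns to recover exactly the twelve rows of Figure~\ref{fig:type-A-patterns} (and $a_p=b_p=0$ otherwise). The main obstacle is just careful bookkeeping around two competing conventions---a pattern viewed intrinsically as a permutation of $\{1,2,3,4\}$ versus the specific values and positions in $w$---and identifying in which phase each individual root occurs; no genuine difficulty arises once the phase lemma is set up.
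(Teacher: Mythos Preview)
Your proposal is correct and follows essentially the same strategy as the paper: classify rank-two subsystems as type $A_2$ (indexed by $321$-patterns) or type $A_1\times A_1$ (indexed by length-four patterns), then compare the induced orders in $\rro(r_1)$ and $\rro(r_2)$ via Lemma~\ref{lem:L2-reduced-words}. Your phase lemma and the split/interleaved/nested trichotomy make explicit what the paper leaves implicit in Figure~\ref{fig:type-A-patterns}---in particular, your uniform observation that the nested configuration always yields the same order in both $\rro(r_i)$ while the other two always differ is a cleaner way to read off $a_p$ than the paper's row-by-row tabulation, but it is the same argument underneath.
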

\begin{proof}
Lemma~\ref{lem:L2-reduced-words} expresses $|L_2(w)|$ as the number of rank-two subsystems $\Psi \subset \Phi$ with $\Psi^+ \subset I_{\Phi}(w)$ and $|L_2(r_1,r_2)|$ as the number of these which appear in different orders in $\rro(r_1)$ and $\rro(r_2)$. Equations (\ref{eq:L2-bound-setup2}) and (\ref{eq:L2-bound-setup1}) group these subsystems according to the length three and four patterns in which they occur, with the coefficients $a_p$ and $b_p$ recording the number of such subsystems contained in an occurrence of a given pattern.

Since $\Phi$ is of type $A_{n-1}$ the only possible rank-two subsystems are of types $A_2$ and $A_1 \times A_1$. Occurrences of the pattern $321$ in $w$ as $w_{i_1}\ldots w_{i_2}\ldots w_{i_3}$ with $i_1<i_2<i_3$ and $w_{i_1}>w_{i_2}>w_{i_3}$ correspond exactly to the type $A_2$ subsystems; the corresponding roots are $e_{w_{i_1}}-e_{w_{i_2}}, e_{w_{i_1}}-e_{w_{i_3}},$ and $e_{w_{i_2}}-e_{w_{i_3}}$. By construction, such triples of roots will always appear in the opposite order in $\rro(r_1)$ as in $\rro(r_2)$, so $a_{321}=b_{321}=1$, proving (i).

Subsystems $\Psi$ of type $A_1 \times A_1$ correspond to collections of distinct indices $i_1<i_2, j_1<j_2$ with $w_{i_1}>w_{i_2}$ and $w_{j_1}>w_{j_2}$. There is an occurrence of one of the twelve patterns $p$ from Figure~\ref{fig:type-A-patterns} in $w$ in positions $\{i_1,i_2,j_1,j_2\}$. For each possible pattern $p$, the table lists the subsystems that correspond to an occurrence of it (the indices $1,2,3,4$ given in the table should be replaced with the positions $k_1<k_2<k_3<k_4$ in which $p$ occurs in $w$). By Lemma~\ref{lem:L2-reduced-words}, the coefficient $b_p$ is the total number of these subsystems for $p$, while $a_p$ is the number for which the roots of $\Psi^+$ occur in different orders in $\rro(r_1)$ and $\rro(r_2)$; these orders are given in the table, allowing $a_p$ to be calculated and proving (ii). 

Since we are only interested in rank-two subsystems, there are no other patterns to consider, resolving (iii).
\end{proof}

We are now ready to complete the proof of Theorem~\ref{thm:type-A-lower-bound}.

\begin{proof}[Proof of Theorem~\ref{thm:type-A-lower-bound}]
We would like to prove this bound by comparing the coefficients $a_p,b_p$ appearing in (\ref{eq:L2-bound-setup1}) and (\ref{eq:L2-bound-setup2}). Figure~\ref{fig:type-A-patterns} shows that $a_p \geq \frac{1}{2} b_p$ except in the cases $p=4132,3241$. We address these exceptions by exploiting the symmetry of the problem under inversion:
\begin{align*}
    \diam(G(w))&=\diam(G(w^{-1}))  \\
    & \geq \frac{1}{2} \sum_p a_p N_p(w) + \frac{1}{2} \sum_p a_p N_p(w^{-1}) \\
    &= \frac{1}{2} \sum_p (a_p+a_{p^{-1}}) N_p(w) \\
    & \geq \frac{1}{2} \sum_p b_p N_p(w) \\
    &= \frac{1}{2} \left| L_2(w) \right|.
\end{align*}
The first line follows from the definitions because reduced words of $w^{-1}$ are just the reverses of reduced words of $w$; the second follows from this after applying (\ref{eq:diam-bound-from-r1r2}) and (\ref{eq:L2-bound-setup1}); the third follows from Proposition~\ref{prop:inverse-patterns}; the fourth follows by noting that Proposition~\ref{prop:table-gives-coefficients} yields $a_p+a_{p^{-1}} \geq b_p$ for all $p$; and the last follows from (\ref{eq:L2-bound-setup2}).
\end{proof}
\subsection{Dahlberg and Kim's conjecture}

\begin{proof}[Proof of Theorem~\ref{thm:type-A-equality-cases}]
The inequality
\begin{equation}
    \frac{1}{2} \sum_p (a_p+a_{p^{-1}}) N_p(w) \geq \frac{1}{2} \sum_p b_p N_p(w)
\end{equation}
from the proof of Theorem~\ref{thm:type-A-lower-bound} is strict unless $w$ avoids all patterns $p$ from
\[
P=\{321, 3142, 2413, 2143\},
\]
since $a_p+a_{p^{-1}} > b_p$ in these cases. Thus, since Dahlberg and Kim already proved that permutations of the form
\begin{equation}
\label{eq:equality-form}
\id_i \oplus (\id_j \ominus \id_k) \oplus \id_{\ell}
\end{equation}
achieve equality in Theorem~\ref{thm:type-A-lower-bound}, it suffices to prove that any $w$ avoiding all the patterns from $P$ is of the form (\ref{eq:equality-form}).

Suppose that $w$ avoids the patterns from $P$. A permutation avoiding $3142$ and $2413$ is called \emph{separable}, and it is known \cite{separable1, wei} that separable permutations may be built up as iterated direct sums and skew sums starting from the permutation $\id_1 \in S_1$. We consider two cases according to weather $w$ is a direct sum or skew sum of smaller separable permutations: 

\emph{Case 1:} If $w=w^{(1)} \oplus \cdots \oplus w^{(m)}$ and this direct sum cannot be refined further, then at most one of the summands decomposes further as a proper skew sum, for otherwise $w$ would contain the pattern $2143$. Thus all but one of the summands are $\id_1$. The remaining summand, say $w^{(c)}$, is itself separable and decomposes as 
\[
w^{(c)}=u^{(1)} \ominus \cdots \ominus u^{(m')}.
\]
We must have $m' \leq 2$, otherwise $w$ would contain the pattern $321$. For the same reason, if $m'=2$ we must have $u^{(1)}=\id_j$ and $u^{(2)}=\id_k$ for some $j,k \geq 1$. Thus in this case we see that $w$ is of the form (\ref{eq:equality-form}) with $i=c-1$ and $\ell=m-c$.

\emph{Case 2:} If instead $w=w^{(1)} \ominus \cdots \ominus w^{(m)}$ and this skew sum cannot be refined further, we must have $m \leq 2$ and $w^{(1)}=\id_j$ and $w^{(2)}=\id_k$ for the same reason as for $w^{(c)}$ in the previous case. Thus $w$ is of the form (\ref{eq:equality-form}) for $i=\ell=0$.
\end{proof}

\subsection{A conjectured upper bound}
\label{sub:3412-conjecture}
Conjecture~\ref{conj:3412} below, based on computational evidence, strengthens both Reiner and Roichman's conjectured upper bound (Conjecture 5.9 of \cite{reiner-roichman}) and Dahlberg and Kim's conjecture regarding the pattern 3412 (Conjecture 7.2 of \cite{dahlberg-kim}).

\begin{conj}\label{conj:3412}
For any permutation $w\in S_n$, we have
\[
\diam(G(w))\leq |L_2(w)|-N_{3412}(w).
\]
\end{conj}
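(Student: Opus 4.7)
The plan is to combine the pattern bookkeeping from Proposition~\ref{prop:table-gives-coefficients} with an explicit path construction in $G(w)$. A natural first attempt is to show that the pair $r_1, r_2 \in \Red(w)$ from Section~\ref{sec:symmetric-group} already realizes the diameter and that these two words are joined by a path whose length matches the ``pattern cost'' exactly.

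The first (easy) step is to observe that
\[ |L_2(r_1, r_2)| \;=\; \sum_p a_p N_p(w) \;\leq\; \sum_p b_p N_p(w) - N_{3412}(w) \;=\; |L_2(w)| - N_{3412}(w), \]
where the inequality is immediate from Proposition~\ref{prop:table-gives-coefficients}: the patterns $p \in \{4321, 4312, 4132, 3421, 3412, 3241\}$ each have $b_p - a_p \geq 1$, and $3412$ is one of them, so subtracting $N_{3412}(w)$ still leaves a nonnegative total. This reduces the conjecture to showing that $\diam(G(w)) \leq |L_2(r_1, r_2)|$.

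The crux is therefore to upgrade the inequality $d(r_1, r_2) \geq |L_2(r_1, r_2)|$ to the equality $d(r_1, r_2) = |L_2(r_1, r_2)|$, and simultaneously to argue that this distance realizes the diameter. The equality is analogous in spirit to Lemma~\ref{lem:accessible-implies-equality}, but here $r_1, r_2$ are not antipodal galleries in any standard sense, so Reiner and Roichman's argument does not directly apply. A promising route is to construct a sequence of Coxeter moves transforming $r_1$ into $r_2$ ``one subsystem at a time,'' resolving each rank-two subsystem in $L_2(r_1, r_2)$ by a single Coxeter move -- a braid move for $A_2$ subsystems (coming from the $321$ patterns enumerated by $a_{321}=1$) and a commutation move for $A_1\times A_1$ subsystems (coming from the twelve length-four patterns in Figure~\ref{fig:type-A-patterns}). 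The construction would process the rank-two subsystems in the order dictated by the shared initial segment of $\rro(r_1)$ and $\rro(r_2)$, so that each move is locally available.

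The hardest part will be showing that $(r_1, r_2)$ attains the diameter, i.e.\ $d(r, r') \leq d(r_1, r_2)$ for all $r, r' \in \Red(w)$. This is not guaranteed: for $w=w_0$ the antipodality exploited in Lemma~\ref{lem:accessible-implies-equality} gives a clean reason, but for general $w$ no such symmetry is available. One would hope to prove that $r_1$ is a graph-theoretic center of $G(w)$ because it is the unique ``greedy left-to-right'' reduced word, making $r_2$ its farthest neighbor. Failing that, a fallback strategy would be induction on $N_{3412}(w)$: the base case ($3412$-avoiding $w$) coincides with Reiner--Roichman's original Conjecture~5.9 restricted to this subclass, where the simpler block structure (only $A_2$-obstructions and the non-$3412$ length-four patterns contribute) makes a direct $L_2$-accessibility argument more tractable; the inductive step would perform a local surgery on one $3412$ occurrence, using the toy case $w=3412$ (where $|\Red(w)|=2$, $|L_2(w)|=2$, $N_{3412}(w)=1$ and $\diam(G(w))=1$, achieving equality) as a model for how a single braid-free commutation absorbs both associated $A_1\times A_1$ subsystems into one move.
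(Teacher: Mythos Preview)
The statement you are trying to prove is Conjecture~\ref{conj:3412}, and the paper does \emph{not} prove it: it is presented as an open problem supported by computational evidence, strengthening the still-open upper-bound conjectures of \cite{reiner-roichman} and \cite{dahlberg-kim}. There is therefore no proof in the paper to compare your proposal against.

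More importantly, your proposed reduction is false. Your first step is fine: from Proposition~\ref{prop:table-gives-coefficients} one indeed has $|L_2(r_1,r_2)|\le |L_2(w)|-N_{3412}(w)$, since $b_p-a_p\ge 0$ for every pattern $p$ and $b_{3412}-a_{3412}=1$. But the reduction ``$\diam(G(w))\le |L_2(r_1,r_2)|$'' is strictly stronger than the conjecture and is not true in general. Take $w=4132$. There are exactly three reduced words,
\[
r_2=s_2s_3s_2s_1,\qquad r_1=s_3s_2s_3s_1,\qquad r_3=s_3s_2s_1s_3,
\]
and $G(w)$ is the path $r_2\text{ --- }r_1\text{ --- }r_3$, so $\diam(G(w))=2$. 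On the other hand $N_{321}(w)=1$ and $N_{4132}(w)=1$ with $a_{4132}=0$, giving $|L_2(r_1,r_2)|=1$. Thus $\diam(G(w))=2>1=|L_2(r_1,r_2)|$, and in particular the pair $(r_1,r_2)$ does \emph{not} realise the diameter. (The conjecture itself still holds here, since $|L_2(w)|-N_{3412}(w)=2-0=2$.)

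Your fallback strategies do not rescue the argument. The proposed base case of an induction on $N_{3412}(w)$ is the class of $3412$-avoiding permutations, but for that class the statement $\diam(G(w))\le |L_2(w)|$ is exactly Reiner--Roichman's Conjecture~5.9 restricted to that class, which is also open; the example $w=4132$ above already shows that $r_1$ is not $L_2$-accessible even when $w$ avoids $3412$, so the ``simpler block structure'' does not make an accessibility argument go through. In short, the obstruction to proving the conjecture is not the $3412$ correction term but the underlying upper bound $\diam(G(w))\le |L_2(w)|$, and your proposal does not address that.
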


The intuition for Conjecture~\ref{conj:3412} is that every occurrence of the pattern 3412 in $w$ contains a rank-two root subsystem $\{e_{i_3}{-}e_{i_2},e_{i_4}{-}e_{i_1}\}$ that contributes to $|L_2(w)|$, but does not correspond to any edge in $G(w)$. 

\section{Comments on lower bounds for other classical types}
\label{sec:other-types}
\subsection{Type $B_n$}
\label{sub:type-B}
For $w \in W(B_n)$, Reiner and Roichman conjecture that
\begin{equation}
\label{eq:type-B-lower-bound}
\frac{1}{3}\left| L_2(w) \right| \leq \diam(G(w)).
\end{equation}
This is very similar to their conjectured lower bound (\ref{eq:main-thm-statement}) in type $A$ which was proven in Section~\ref{sec:symmetric-group}.

The strategy given in Section~\ref{sec:symmetric-group} makes sense in type $B$ as well, but in this case it is insufficient to prove the full strength of the conjecture. Following the same argument, with Simion's type $B$ patterns \cite{simion} replacing permutation patterns, instead gives the weaker bound
\[
\frac{1}{4} \left| L_2(w) \right| - \frac{1}{2}N_{132\bar{4}}(w)-\frac{1}{2}N_{\bar{1}32\bar{4}}(w) \leq \diam(G(w))
\]
in type $B$. The coefficient-by-coefficient approach used in Section~\ref{sec:symmetric-group} for comparing (\ref{eq:L2-bound-setup1}) and (\ref{eq:L2-bound-setup2}) no longer suffices, and some other understanding of these quantities is needed.

\subsection{Type $D_n$}
\label{sub:lower-bounds-D}
The following example shows that there is no uniform lower bound
\[
\varepsilon |L_2(x)| \leq \diam(G(x))
\]
for $\varepsilon>0$ in type $D$, unlike in type $A$ (Theorem~\ref{thm:type-A-lower-bound}), and unlike the conjecture (\ref{eq:type-B-lower-bound}) in type $B$.

\begin{ex}
\label{ex:no-lower-bound-type-D}
Label the Dynkin diagram of type $D_n$ as in Figure~\ref{fig:Dynkin}, and let
\[
w=s_n s_{n-1} \cdots s_4 s_3 s_2 s_1 s_3 s_4 \cdots s_{n-1}s_n.
\]
This is the longest element $w_0^J$ for the parabolic quotient of $W$ corresponding to $J=\{s_1,\ldots, s_{n-1}\}$. As the only move which can be applied to the given reduced word is the commutation of $s_1$ and $s_2$, and as no more moves are applicable after that, it is clear that $\diam(G(w))=1$. 

The inversion set for $w$ is:
\[
I_{\Phi}(w) = \{e_n-e_i \: | \: 1 \leq i \leq n-1\} \cup \{e_n+e_i \: | \: 1 \leq i \leq n-1\}.
\]
The only rank-two subsystems whose positive roots are contained in $I_{\Phi}(w)$ are thus the $n-1$ subsystems of type $A_1 \times A_1$ with positive roots $\{e_n-e_i,e_n+e_i\}$ for $i=1,\ldots,n-1$. Thus $\left| L_2(w) \right|=n-1$. In particular, taking $n$ large we see that there is no uniform lower bound for $\diam(x)/|L_2(x)|$ in type $D$.
\end{ex}

\section{Upper bounds in type $D_n$}
\label{sec:upper-bounds-D}
Studying $\diam(G(w_0))$ is of particular interest in light of the following open problem: 
\begin{quest}[Reiner and Roichman \cite{reiner-roichman}]
\label{quest:w0-question}
Do we have
\[
\diam(G(w_0))=|L_2|
\]
in every finite Coxeter group? 
\end{quest}
This question was answered affirmatively for types $A_n$ and $B_n$ and the dihedral groups by Reiner and Roichman \cite{reiner-roichman} but remains open for the remaining infinite family $D_n$ and for several exceptional types.

In all cases in which Question~\ref{quest:w0-question} has been resolved, this has been done by exhibiting an accessible reduced word and applying Lemma~\ref{lem:accessible-implies-equality}. The following observation, confirmed via computer checks, shows that this method will be insufficient in general.

\begin{prop}\label{prop:no-accessible-D5}
There does not exist any $L_2$-accessible reduced word for $w_0(D_5)$.
\end{prop}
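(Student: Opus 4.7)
The plan is to establish this by finite computer verification, since $\Red(w_0(D_5))$ is a finite set whose adjacency graph $G(w_0(D_5))$ is explicit. To show that no reduced word is $L_2$-accessible, it suffices to exhibit, for every $r \in \Red(w_0(D_5))$, some $r' \in \Red(w_0(D_5))$ with $d(r,r') > |L_2(r,r')|$.

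The computation has four ingredients. First, enumerate $\Red(w_0(D_5))$: starting from any single reduced word of the length-$20$ element $w_0(D_5)$ and iteratively closing under Coxeter and commutation moves; by Tits' theorem this produces the full set, and the edges of $G(w_0(D_5))$ are recorded in the process. Second, run breadth-first search from each vertex of $G(w_0(D_5))$ to compute the pairwise distances $d(r,r')$. Third, compute $|L_2(r,r')|$ via Lemma~\ref{lem:L2-reduced-words}(b): enumerate all rank-two subsystems $\Psi \subset \Phi(D_5)$ with $\Psi^+ \subset I_\Phi(w_0) = \Phi^+$ (necessarily of types $A_1 \times A_1$ or $A_2$, since $D_5$ is simply laced), and for each such $\Psi$ test whether its positive roots appear in different orders in $\rro(r)$ and $\rro(r')$. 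Fourth, for each $r$, verify that $\max_{r'}\bigl(d(r,r') - |L_2(r,r')|\bigr) > 0$, which by Definition~\ref{def:L2-accessible} certifies that no $r$ is $L_2$-accessible.

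The main obstacle is the sheer size of $\Red(w_0(D_5))$, which makes a naive all-pairs computation costly. I would reduce the search via two symmetries that preserve $L_2$-accessibility. First, the nontrivial Dynkin diagram automorphism of $D_5$ swapping $\alpha_1$ and $\alpha_2$ fixes $w_0$ and induces an action on $\Red(w_0(D_5))$ that preserves both the graph structure of $G(w_0(D_5))$ and the separation sets $L_2(r,r')$. Second, since $w_0 = w_0^{-1}$, the reversal map $r \mapsto r^{\mathrm{rev}}$ is an involution on $\Red(w_0(D_5))$ which swaps the roles of $\id$ and $w_0$ and likewise preserves $L_2$-accessibility. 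Testing one representative from each orbit under the resulting $(\mathbb{Z}/2)^2$ action yields up to a fourfold speedup, after which the computation is easily within reach. Since the statement is a negative existence claim settled by a finite check, no further conceptual insight beyond the certification data seems necessary.
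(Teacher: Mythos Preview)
Your proposal is correct and takes essentially the same approach as the paper, which simply records that the proposition was ``confirmed via computer checks'' with no further detail. Your description of the enumeration, BFS, root-ordering comparison, and symmetry reductions is a faithful (and more explicit) realization of exactly that computation.
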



In this section, we show, however, that there exists a reduced word $r\in\Red(w_0(D_n))$ that is close to being $L_2$-accessible. The construction of $r$ is analogous to the reduced words considered in Section~\ref{sec:symmetric-group}. 

We recall that in type $D_n$, $w_0(1)=(-1)^{n+1}$ and $w_0(i)=\bar i$ for $2\leq i\leq n$. Also recall that $\alpha_1=e_2+e_1$, $\alpha_i=e_i-e_{i-1}$ for $2\leq i\leq n$ so that $s_1$ and $s_2$ commute. Let \[
r=(s_1s_2)(s_3s_2s_1s_3)(s_4s_3s_1s_2s_3s_4)\cdots (s_ns_{n-1}\cdots s_{\frac{3-(-1)^n}{2}}s_{\frac{3+(-1)^n}{2}}s_3\cdots s_n),
\]
where the order of the product $s_1s_2$ in each factor above alternates. Viewing $r$ as decreasing $w$ to $\id$, it first sends $n$ and $\bar n$ to their positions, by swapping $n$ with $n-1,n-2,\ldots,1,\bar1,\ldots,\overline{n-1}$ in this order, and then sends $n-1$ and $\overline{n-1}$ to their positions and so on. An easy check shows that the reverse root ordering is:
\[
\rro(r)=e_n{-}e_{n-1},\ldots,e_n{-}e_1,e_n{+}e_1,\ldots,e_n{+}e_1,\ldots,e_n{+}e_{n-1},e_{n-1}{-}e_{n-2},\ldots,e_2{+}e_1.
\]

\begin{theorem}\label{thm:Dn-close-accessible}
Let $r$ be as above. For any reduced word $r'\in\Red(w_0(D_n))$,
\[
d(r',r)< |L_2(r',r)| + \frac{2}{3}n^3.
\]
\end{theorem}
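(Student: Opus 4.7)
The plan is to argue by induction on $n$, using the natural decomposition $r = r^{(n-1)} \cdot t_n$, where $t_n$ is the final factor of length $2(n-1)$ and $r^{(n-1)}$ is the reduced word for $w_0(D_{n-1})$ obtained by the analogous construction inside the parabolic subgroup $\langle s_1,\ldots,s_{n-1}\rangle \cong W(D_{n-1})$. The defining property of this decomposition is that $t_n$ handles exactly the $2(n-1)$ positive roots $\{e_n \pm e_i : 1 \leq i \leq n-1\}$, which appear as the initial segment of $\rro(r)$, while $r^{(n-1)}$ handles the remaining inversions, all lying in $\Phi(D_{n-1})^+$. The base case $n=2$ is immediate since $W(D_2) \cong A_1 \times A_1$ satisfies $\diam(G(w_0)) = 0$.

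The key observation is that in type $D$ every Coxeter move is either a commutation or an order-$3$ braid move, and in both cases the move reverses the internal order of the positive roots of the unique rank-two subsystem corresponding to the move. Consequently, every Coxeter move changes $|L_2(\cdot,r)|$ by exactly $\pm 1$. If $r'$ is transformed into $r$ by an explicit sequence of Coxeter moves, the total number of moves equals $|L_2(r',r)| + 2B$, where $B$ denotes the number of \emph{bad} moves, i.e.\ those which increase $|L_2(\cdot, r)|$. The theorem thus reduces to designing a procedure with $B < \tfrac{1}{3}n^3$.

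I would structure the procedure in two stages: first, transform $r'$ into some $\tilde r = \tilde r^{(n-1)} \cdot t_n$ whose suffix is exactly $t_n$; second, invoke the induction hypothesis to transform $\tilde r^{(n-1)}$ into $r^{(n-1)}$ inside the parabolic subgroup, where the moves and the $L_2$-counts agree with those of the ambient problem. The main obstacle is Stage 1: pulling all the simple reflections that produce the roots $e_n \pm e_i$ to the end of the word, in the prescribed order of $t_n$. The difficulty stems from the $n-1$ \emph{diagonal} $A_1 \times A_1$ subsystems $\{e_n - e_i, e_n + e_i\}$ exhibited in Example~\ref{ex:no-lower-bound-type-D}: their two positive roots cannot be reordered by any single Coxeter move, and must instead be separated by a long chain of braid and commutation moves that traverse the $D_n$ Dynkin diagram. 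Each such obstruction can force up to $O(n)$ wasted moves in the course of the suffix extraction.

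A careful algorithmic argument --- processing the target positions of $t_n$ one at a time from the innermost outward, using the order-$3$ braid relations on $s_3,\ldots,s_n$ together with the commutation of $s_1$ and $s_2$ --- should yield at most $(n-1)^2$ bad moves in Stage 1, giving the recursion $B_n \leq (n-1)^2 + B_{n-1}$. This solves as
\[
B_n \leq \sum_{k=1}^{n-1} k^2 = \frac{(n-1)n(2n-1)}{6} < \frac{n^3}{3},
\]
so $d(r',r) \leq |L_2(r',r)| + 2B_n < |L_2(r',r)| + \tfrac{2}{3}n^3$, as required. The main obstacle --- and the part demanding the most delicate bookkeeping --- is proving the $(n-1)^2$ bound for Stage 1 via the explicit suffix-extraction procedure, since a naïve implementation would introduce additional bad moves per diagonal obstruction and fail to telescope to the desired $\tfrac{2}{3}n^3$ constant.
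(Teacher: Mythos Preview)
Your high-level plan is the same as the paper's: induct on $n$, first rearrange $r'$ so that the $2(n-1)$ roots in $A_n=\{e_n\pm e_i\}$ form the initial block of the reverse root ordering (equivalently, the suffix of the word is $t_n$), then recurse on $D_{n-1}$. Your observation that in simply-laced type every Coxeter move changes $|L_2(\cdot,r)|$ by exactly $\pm 1$, so that the cost of a path equals $|L_2(r',r)|+2B$ with $B$ the number of bad moves, is correct and is exactly the content of tracking the function $f(r')=d(r',r)-|L_2(r',r)|$ that the paper uses.

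The gap is that you never prove the only nontrivial part of the argument: the bound on bad moves in Stage~1. You assert that ``a careful algorithmic argument \ldots\ should yield at most $(n-1)^2$ bad moves'' and then immediately concede that this is ``the main obstacle'' and ``the part demanding the most delicate bookkeeping.'' That is precisely where all the work lies. The paper carries this out by a one-root-at-a-time analysis (its Claim inside the proof): locate the last root $\gamma_t\notin A_n$ that precedes the final $A_n$-block $\gamma_{t+1},\ldots,\gamma_{t+k}$ in $\rro(r')$, and push $\gamma_t$ past that block by an explicit short sequence of moves. A case analysis on the position $j=(u^{(t)})^{-1}(n)$ and on $i_t$ shows that in every case one needs no bad moves, \emph{except} in the one critical situation where $i_t\in\{1,2\}$ and the $A_n$-block runs through the fork of the Dynkin diagram (the paper's Subcase~4.2). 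There the reordering genuinely forces one extra bad move, so $f$ may increase by $2$. Since there are at most $|\Phi^+\setminus A_n|=(n-1)(n-2)$ such pushes, Stage~1 costs at most $2(n-1)(n-2)$ in $f$, and summing over levels gives $\sum_{j\le n}2(j-1)(j-2)=\tfrac{2}{3}(n-2)(n-1)n<\tfrac{2}{3}n^3$.

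Without this case analysis (or an equivalent), your proposal is a statement of the theorem rather than a proof. Your vague description of Stage~1 (``processing the target positions of $t_n$ one at a time from the innermost outward'') does not by itself control the number of bad moves; the substance of the argument is identifying exactly which local configurations force a bad move and verifying that all others do not.
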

Theorem~\ref{thm:Dn-close-accessible} says that $r\in\Red(w_0(D_n))$ is almost $L_2$-accessible, since $|L_2|=\frac{1}{6}n(n-1)(3n^2-11n+13)=\frac{1}{2}n^4+O(n^3)$.
\begin{proof}[Proof of Theorem~\ref{thm:Dn-close-accessible}]
For $j=2,\ldots,n$, let
\[
A_j=\{e_j\pm e_i\:|\: 1\leq i<j\}\subset\Phi^+(D_n).
\]
Thus, we have a partition of positive roots $\Phi^+(D_n)=\bigsqcup_{j=2}^n A_j.$

Our strategy is the following. Given $r'$, we will gradually move it towards $r$ and keep track of the change in $d(-,r)$ and $|L_2(-,r)|$. For $r'\in\Red(w_0)$, define \[f(r')\coloneqq d(r',r)-|L_2(r',r)|.\]

Let $r'=s_{i_{\ell}}\cdots s_{i_1}\in\Red(w_0)$ be an explicit reduced word and $\rro(r')=\gamma_1,\gamma_2,\ldots,\gamma_{\ell}$ be the reverse root ordering of $r'$, where $\ell=n^2-n$ is the number of positive roots in type $D_n$. Let $u^{(j)}=ws_{i_1}\cdots s_{i_{j}}$ so that $u^{(0)}=w$ and $u^{(\ell)}=\id$. 

Let $t=t(r')$ be the largest integer such that $\gamma_t\notin A_n$ and $\gamma_{t+1}\in A_n$. If such $t$ does not exist, then $\{\gamma_1,\ldots,\gamma_{2n-2}\}=A_n$, which is a degenerate case that will be discussed later. Let $k\geq1$ be the integer such that $\gamma_{t+1},\ldots,\gamma_{t+k}\in A_n$ and $\gamma_{t+k+1}\notin A_n$. By the definition of $t$, we have $\gamma_{t+k+1},\ldots,\gamma_{\ell}\notin A_n$. In other words, we focus on the last block of consecutive roots $\gamma_{t+1},\ldots,\gamma_{t+k}$ in $\rro(r')$ that belong to $A_n$. 

Let $j=(u^{(t)})^{-1}(n)$ be the index of $n$ in the signed permutation $u^{(t)}$. From $u^{(t)}$ to $u^{(t+k)}=u^{(t)}s_{i_{t+1}}\cdots s_{i_{t+k}}$, the value $n$ is moved consecutively from index $j$ to index $n$. Note that since $\gamma_{t}\notin A_n$, the simple transposition $s_{i_t}$ does not involve index $j$ (or $\bar j$). 
\begin{claim}\label{claim:D4-one-step}
We can locally change $s_{i_t}s_{i_{t+1}}\cdots s_{i_{t+k}}$ to obtain another reduced word $r''\in\Red(w_0)$ such that
\begin{equation}\label{eq:rror''}
    \rro(r'')=\gamma_1,\ldots,\gamma_{t-1},\underline{\gamma_{t+1}',\ldots,\gamma_{t+k}',\gamma_{t}},\gamma_{t+k+1},\ldots,\gamma_{\ell}
\end{equation}
where $\gamma_{t+1}',\ldots,\gamma_{t+k}'$ is a permutation of $\gamma_{t+1},\ldots,\gamma_{t+k}$, and such that $f(r')\leq f(r'')+2$. 
\end{claim}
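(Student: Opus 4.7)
The plan is to exhibit $r''$ by a local modification of the subword $s_{i_t}s_{i_{t+1}}\cdots s_{i_{t+k}}$ and to bound $f(r')-f(r'')$ via the inequality
\[
f(r')-f(r'')\leq d(r',r'')+|L_2(r',r'')|-2\,|L_2(r',r)\cap L_2(r',r'')|,
\]
obtained by combining the triangle inequality for $d(\cdot,r)$ with the symmetric difference identity $L_2(r'',r)=L_2(r',r)\Delta L_2(r',r'')$ recorded in Section~\ref{sub:background-hyperplane}. The key geometric input is that $\rro(r)$ lists every root of $A_n$ before every root outside $A_n$, so the rearrangement prescribed by the claim moves $\gamma_t$ in the ``correct'' direction relative to $r$.

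First I would analyze the pairwise structure inside the subword. Writing $\gamma_t=e_a\pm e_b$ with $a,b<n$ and $\gamma_{t+j}=e_n\pm e_{c_j}$, a direct inner-product check shows that $\gamma_t$ and $\gamma_{t+j}$ are orthogonal precisely when $c_j\notin\{a,b\}$; otherwise they generate a rank-two subsystem of type $A_2$ whose third positive root also lies in $A_n$. In particular, at most four of the $\gamma_{t+j}$ fail to commute with $\gamma_t$.

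Next I would construct $r''$. In the generic case, when every $\gamma_{t+j}$ commutes with $\gamma_t$, I take $\gamma'_{t+j}=\gamma_{t+j}$ and obtain $r''$ from $r'$ by the $k$ commutation moves that transport $\gamma_t$ past the block one root at a time. Each of the $k$ toggled $A_1\times A_1$ subsystems $\{\gamma_t,\gamma_{t+j}\}$ already separates $r'$ from $r$ (since in $\rro(r)$ the $A_n$-root $\gamma_{t+j}$ precedes $\gamma_t$ while the opposite holds in $\rro(r')$), and substituting into the inequality yields $f(r')-f(r'')\leq 0$. In the obstructed case I would permute the block so that the (at most four) non-orthogonal roots are grouped adjacent to the slot $\gamma_t$ is to occupy in $\rro(r'')$, using commutations among orthogonal $A_n$-roots and a bounded number of braid moves internal to the block; I would then pass $\gamma_t$ across each obstructing root by a braid move using the third root of the associated $A_2$. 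I would choose the permutation $\gamma'_{t+1},\ldots,\gamma'_{t+k}$ so that it agrees with the relative order of these roots in $\rro(r)$, so that the $L_2$-toggles arising from the internal rearrangement also lie in $L_2(r',r)$.

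The main obstacle I expect is the accounting in the obstructed case. Specifically, the third root of an obstructing $A_2$ may lie outside the subword (in $\gamma_1,\ldots,\gamma_{t-1}$), which prevents a purely local braid move; this must be handled by a longer local sequence of moves that reshuffles the block's internal order around the obstruction. A short case analysis, together with careful treatment of the degenerate $s_1$ interactions at the branching node of the $D_n$ Dynkin diagram, should show that the number of rank-two subsystems toggled in the ``wrong'' direction, i.e., in $L_2(r',r'')\setminus L_2(r',r)$, is at most one; this is precisely the source of the slack of $2$ in $f(r')\leq f(r'')+2$.
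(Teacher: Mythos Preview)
Your inequality $f(r')-f(r'')\leq d(r',r'')+|L_2(r',r'')|-2\,|L_2(r',r)\cap L_2(r',r'')|$ is correct, and the fully commuting case is fine; that is exactly the paper's Subcase~1.1. The gap is in the obstructed case, which is where all the content lies. Two concrete problems. First, you propose to rearrange the $A_n$-block ``using commutations among orthogonal $A_n$-roots,'' but distinct roots $e_n\pm e_i$ and $e_n\pm e_j$ with $i\neq j$ are \emph{not} orthogonal, so the block cannot be freely permuted by commutations; you give no control on the braid moves needed for an internal rearrangement, nor any argument that the resulting permutation can be made to agree with the order in $\rro(r)$. Second, your diagnosis of the $+2$ is off: in the actual critical case one has $L_2(r',r'')\subset L_2(r',r)$ (zero ``wrong-direction'' toggles), and the slack arises instead from $d(r',r'')>|L_2(r',r'')|$, because one is forced to commute $s_1$ with $s_2$ and later undo that commutation, spending two moves that cancel in $L_2$. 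So the mechanism you sketch for the bound is not the one that operates.

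The paper sidesteps these issues by working at the level of simple reflections rather than root orderings. Because $\gamma_{t+1},\ldots,\gamma_{t+k}\in A_n$ records the value $n$ moving from its current index $j=(u^{(t)})^{-1}(n)$ to index $n$, the subword $s_{i_{t+1}}\cdots s_{i_{t+k}}$ is completely determined (up to one $s_1s_2\leftrightarrow s_2s_1$) by $j$. A finite case analysis on the sign and size of $j$ and on $i_t$ then produces an explicit rewriting of $s_{i_t}s_{i_{t+1}}\cdots s_{i_{t+k}}$ into the required form, with exact counts of both $d(r',r'')$ and $|L_2(r',r)|-|L_2(r'',r)|$. In every case except $j\leq -3$ with $i_t\in\{1,2\}$ one gets $f(r')\leq f(r'')$; in that remaining case the $D_n$ branching forces the two extra moves just described, yielding $f(r')\leq f(r'')+2$.
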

\begin{proof}[Proof of Claim~\ref{claim:D4-one-step}]
Consider the following cases.

\noindent\textbf{Case 1:} $j\geq1$. In this case, $s_{i_{t+1}},\ldots,s_{i_{t+k}}=s_{j+1},\ldots,s_n$. As $\gamma_t\notin A_n$, we also know that ${i_t}\neq j, j+1$.

Subcase 1.1: $i_t\leq j-1$. Then $s_{i_t}$ commutes with $s_{i_{t+1}},\ldots,s_{i_{t+k}}$. Change $s_{i_t},\ldots,s_{i_{t+k}}$ in $r'$ to $s_{i_{t+1}},\ldots,s_{i_{t+k}},s_{i_t}$ to obtain $r''$ and we easily see that $\rro(r'')$ has the form as above in Equation~\eqref{eq:rror''}. Moreover, $L_2(r'',r)$ is exactly $L_2(r',r)$ with the $k$ root subsystems of type $A_1\times A_1$ generated by $\gamma_t$ and $\gamma_{t+a}$ for $a=1,\ldots,k$ removed, so $|L_2(r',r)|-|L_2(r'',r)|=k$. We need $k$ commutation moves to obtain $r''$ from $r'$ so $d(r',r)-d(r'',r)\leq d(r',r'')\leq k$. This gives $f(r')\leq f(r'')$. 

Subcase 1.2: $i_t\geq j+2$. We can use commutation moves, then a Coxeter move $s_{i_t}s_{i_t-1}s_{i_t}=s_{i_t-1}s_{i_t}s_{i_t-1}$, then commutation moves to locally change $s_{i_t},s_j,\ldots,s_n$ to $s_j,\ldots,s_n,s_{i_t-1}$. We check that in this case $\rro(r'')$ satisfies Equation~\eqref{eq:rror''}. Here, $L_2(r'',r)$ is exactly $L_2(r',r)$ taken away $k-2$ root subsystems of type $A_1\times A_1$ and one root subsytem of type $A_2$, so $|L_2(r',r)|-|L_2(r'',r)|=k-1$. As $r''$ can be obtained from $r'$ via $k-2$ commutation moves and one Coxeter move, we conclude that $d(r',r)-d(r'',r)\leq d(r',r'')\leq k-1$ so $f(r')\leq f(r'')$. Table~\ref{tab:r'tor''-case1.2} shows an example of the change from $r'$ to $r''$.
\begin{table}[h!]
\renewcommand*{\arraystretch}{1.2}
\centering
\begin{tabular}{c|c|c|c|c|c|c|}
 & $u^{(t-1)}$ & $u^{(t)}$ & $\cdots$ & $\cdots$ & $\cdots$ & $u^{(t+k)}$  \\\hline
$r'$ & $\bar3612\bar45$ & $\bar361\bar425$ & $\bar316\bar425$ & $\bar31\bar4625$ & $\bar31\bar4265$ & $\bar31\bar4256$ \\
$\rro(r')$ & & $e_4+e_2$ & $e_6-e_1$ & $e_6+e_4$ & $e_6-e_2$ & $e_6-e_5$ \\\hline
$r''$ & $\bar3612\bar45$ & $\bar3162\bar45$ & $\bar3126\bar45$ & $\bar312\bar465$ & $\bar312\bar456$ & $\bar31\bar4256$ \\
$\rro(r'')$ & & $e_6-e_1$ & $e_6-e_2$ & $e_6+e_4$ & $e_6-e_5$ & $e_4+e_2$ \\\hline
\end{tabular}
\caption{An example of how $r''$ is obtained from $r'$ in Subcase 1.2, where $n=6$, $j=2$, $i_t=5$.}
\label{tab:r'tor''-case1.2}
\end{table}

The majority of future cases require the same analysis of $f(r'')$ as in Subcase 1.1 and Subcase 1.2. We will omit details for those situations.

\noindent\textbf{Case 2:} $j=-1$. In this case, $s_{i_{t+1}},\ldots,s_{i_{t+k}}=s_1,s_3,s_4,\ldots,s_n$ and $i_t\neq 1,2$. As in Subcase 1.2, we can use commutation moves, one Coxeter move of the form $s_{i_t}s_{i_t-1}s_{i_t}=s_{i_t-1}s_{i_t}s_{i_t-1}$ followed by commutations moves to obtain $\rro(r'')$ as in Equation~\eqref{eq:rror''}. The same analysis as in Subcase 1.2 shows that $|L_2(r',r)|-|L_2(r'',r)|=k-1$ and $d(r',r)-d(r'',r)\leq k-1$ so $f(r')\leq f(r'')$.

\noindent\textbf{Case 3:} $j=-2$. In this case, $s_{i_{t+1}},\ldots,s_{i_{t+k}}=s_1,s_2,s_3,s_4,\ldots,s_n$ or $s_2,s_1,s_3,s_4,\ldots,s_n$ and $i_t\neq 1,2$. We run through the same analysis as in Case 2 to obtain $f(r')\leq f(r'')$.

\noindent\textbf{Case 4:} $j\leq -3$. We know that $i_t\neq -j,-j{+}1$. In this case, $s_{i_{t+1}},\ldots,s_{i_{t+k}}$ is equal to $s_{-j},s_{-j-1},\ldots,$ $s_3,s_1,s_2,s_3,\ldots,s_n$ or to $s_{-j},s_{-j-1},\ldots,s_3,s_2,s_1,s_3,s_4,\ldots,s_n$. 

Subcase 4.1: $i_t\geq3$. The analysis here is exactly the same as in Cases 2 and 3, for which we conclude that $f(r')\leq f(r'')$.

Subcase 4.2: $i_t=1$ or $2$. This is the critical case of the entire proof that makes type $D$ different from type $A$ and $B$. By the symmetry of the Dynkin diagram swapping nodes $1$ and $2$, we can assume without loss of generality that $i_t=1$. We replace the relevant positions of $s_{i_t},s_{i_{t+1}},\ldots,s_{i_{t+k}}$ of $s_{i_t}=s_1,s_{-j},s_{-j-1},\ldots,s_3,(s_1,s_2)/(s_2,s_1),s_3,\ldots,s_n$ by $s_{-j},\ldots,s_3,(s_1,s_2)/(s_2,s_1),s_3,\ldots,s_n,s_2$ to obtain $r''$. We check that $\rro(r'')$ has the form in Equation~\eqref{eq:rror''} since the value $n$ is now moved to its index $n$ from index $-j$ before the values $u^{(t-1)}(1)=:a$ and $u^{(t-1)}(2)=:b$ are swapped. 

As before, $L_2(r',r)$ contains $L_2(r'',r)$ and their difference consists of $2$ root subsystems of type $A_2$: $\{e_n-e_a,e_n-e_b,\pm(e_a-e_b)\}$ and $\{e_n+e_a,e_n+e_b,\pm(e_a-e_b)\}$; and $k-4$ root subsystems of type $A_1\times A_1$ generated by $\pm(e_a-e_b)$ with those $\gamma_{t+k'}$'s not involving coordinates $\pm a,\pm b$. This means $|L_2(r',r)|-|L_2(r'',r)|=k-2$. As for $d(r',r'')$, we have two possibilities:
\begin{align*}
&s_1s_{-j}\cdots s_3s_1s_2s_3\cdots s_n=s_{-j}\cdots s_4s_1s_3s_1s_2s_3\cdots s_n\\
=&s_{-j}\cdots s_4s_3s_1s_3s_2s_3\cdots s_n=s_{-j}\cdots s_4s_3s_1s_2s_3s_2\cdots s_n\\
=&s_{-j}\cdots s_4s_3s_1s_2s_3s_4\cdots s_ns_2
\end{align*}
with $k-4$ commutation moves and 2 Coxeter moves, or

\begin{align*}
&s_1s_{-j}\cdots s_3s_2s_1s_3\cdots s_n=s_1s_{-j}\cdots s_3s_1s_2s_3\cdots s_n\\
=&s_{-j}\cdots s_4s_1s_3s_1s_2s_3\cdots s_n=s_{-j}\cdots s_4s_3s_1s_3s_2s_3\cdots s_n\\
=&s_{-j}\cdots s_4s_3s_1s_2s_3s_2\cdots s_n=s_{-j}\cdots s_4s_3s_1s_2s_3s_4\cdots s_ns_2\\
=&s_{-j}\cdots s_4s_3s_2s_1s_3s_4\cdots s_ns_2
\end{align*}
with $k-2$ commutation moves and 2 Coxeter moves. Both situations give $d(r',r'')\leq k$. Therefore, $f(r')\leq f(r'')+2$.
\end{proof}

Let $r''\in\Red(w_0)$ be as in Claim~\ref{claim:D4-one-step}. Recall that $t(r'')$ is the largest integer such that the $t(r'')^{th}$ root of $\rro(r'')$ is not in $A_n$, but the $t(r'')^{th}+1$ root of $\rro(r'')$ is in $A_n$. We necessarily have $t(r'')<t(r')$. Continue to apply Claim~\ref{claim:D4-one-step} to $r''$, we eventually arrive at some reduced word $r'''\in\Red(w_0)$ such that $t(r''')=0$, i.e. $A_n$ appears as the first $2n-2$ roots in $\rro(r''')$. The number of times that we apply Claim~\ref{claim:D4-one-step} is the number of roots among $\gamma_1,\ldots,\gamma_{t}$ that are not in $A_n$, which is at most $|\Phi^+\setminus A_n|=(n-1)(n-2)$. Therefore, $f(r')\leq f(r''')+2(n-1)(n-2)$. 

Thus, the first $2n-2$ roots in $\rro(r''')$ and $\rro(r)$ must be the same, or differ by a swap of $e_n-e_1$ and $e_n+e_1$ in the $(n-1)^{th}$ and $n^{th}$ position, which corresponds to the commutation move $s_1s_2=s_2s_1$. Such commutation move on $r'''$ decreases $|L_2(r''',r)|$ by 1 so $f(r''')$ does not decrease. As a result, for our purposes, we can assume that $\rro(r''')$ and $\rro(r)$ are equal in the first $2n-2$ roots.

We are now reduced to the case of $D_{n-1}$. By applying the whole process to $n-1,n-2,\ldots$, we arrive at
\[
f(r')\leq f(r)+\sum_{j=1}^n2(j-1)(j-2)=\frac{2}{3}(n-2)(n-1)n.
\]
This means $d(r',r)< |L_2(r',r)|+\frac{2}{3}n^3$ as desired.
\end{proof}

\begin{cor}\label{cor:D4-diameter-upper-bound}
For type $D_n$, $\diam(G(w_0))< |L_2|+\frac{4}{3}n^3$.
\end{cor}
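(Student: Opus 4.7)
The strategy is to combine Theorem~\ref{thm:Dn-close-accessible} with a symmetric ``opposite'' statement and average the two resulting triangle-inequality bounds.

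For any $r_1, r_2 \in \Red(w_0(D_n))$, two applications of Theorem~\ref{thm:Dn-close-accessible} together with the triangle inequality in $G(w_0)$ give
\[
d(r_1, r_2) \leq d(r_1, r) + d(r, r_2) < |L_2(r_1, r)| + |L_2(r_2, r)| + \tfrac{4}{3} n^3.
\]
Since each $|L_2(r_i, r)|$ can be as large as $|L_2|$, this alone only yields the weaker bound $\diam(G(w_0)) < 2|L_2| + \tfrac{4}{3} n^3$; a factor-of-two improvement is required.

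To obtain this improvement, I would introduce a reduced word $r^{\ast} \in \Red(w_0)$ playing the role of an ``opposite'' to $r$. Concretely, the gallery obtained by reversing the opposite gallery $-r$ lies in $\mathcal{R}(\id, w_0)$ (using that $w_0$ is an involution in $D_n$), and corresponds to some $r^{\ast} \in \Red(w_0)$. For each $X \in L_2$, the projections $r/X$ and $r^{\ast}/X$ are the two distinct geodesics between the antipodal chambers $\id/X$ and $w_0/X$ in $\mathcal{A}_X$, so $|L_2(r, r^{\ast})| = |L_2|$. Combined with the symmetric-difference identity $L_2(r', r) \,\Delta\, L_2(r', r^{\ast}) = L_2(r, r^{\ast})$ from Section~\ref{sub:background-hyperplane}, this yields the complementarity
\[
|L_2(r', r)| + |L_2(r', r^{\ast})| = |L_2| \qquad \text{for every } r' \in \Red(w_0).
\]

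By the symmetric nature of the construction, the proof of Theorem~\ref{thm:Dn-close-accessible} should apply with $r$ replaced by $r^{\ast}$, giving $d(r', r^{\ast}) < |L_2(r', r^{\ast})| + \tfrac{2}{3}n^3$ for all $r' \in \Red(w_0)$. The triangle inequality through $r^{\ast}$, together with the complementarity above, then gives
\[
d(r_1, r_2) < |L_2(r_1, r^{\ast})| + |L_2(r_2, r^{\ast})| + \tfrac{4}{3} n^3 = 2|L_2| - |L_2(r_1, r)| - |L_2(r_2, r)| + \tfrac{4}{3} n^3.
\]
Averaging this with the first bound yields $d(r_1, r_2) < |L_2| + \tfrac{4}{3} n^3$, which proves the corollary.

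The main obstacle is verifying that the proof of Theorem~\ref{thm:Dn-close-accessible} really does apply to $r^{\ast}$. While the ``reverse of the opposite gallery'' is a natural symmetric construction, the block decomposition $\Phi^+(D_n) = \bigsqcup_{j=2}^n A_j$ and the case analysis in Claim~\ref{claim:D4-one-step} (especially Case 4.2, which is where the $s_1$/$s_2$ asymmetry of the $D_n$ Dynkin diagram manifests and drives the $\tfrac{2}{3}n^3$ error term) must be re-examined with the roles of $r$ and $r^{\ast}$ swapped, and this bookkeeping is the one step requiring genuine care.
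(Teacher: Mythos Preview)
Your strategy is exactly the paper's: route through both $r$ and its opposite, then average the two triangle-inequality bounds using the complementarity $|L_2(r',r)|+|L_2(r',-r)|=|L_2|$.  Two small points deserve correction.

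First, a minor confusion about $-r$: since the identity chamber and the $w_0$ chamber are antipodal, the opposite gallery $-r$ already lies in $\mathcal{R}(\id,w_0)$; no additional reversal (and no appeal to $w_0^2=\id$) is needed.  Your $r^\ast$ should simply be $-r$.

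Second, and more substantively, the ``main obstacle'' you flag is not an obstacle at all, and the paper does \emph{not} redo any of the case analysis of Claim~\ref{claim:D4-one-step} for $-r$.  The map $x\mapsto -x$ on galleries is a graph automorphism of $G(w_0)$ which also preserves separation sets, so $d(x,-r)=d(-x,r)$ and $|L_2(x,-r)|=|L_2(-x,r)|$.  Applying Theorem~\ref{thm:Dn-close-accessible} with $r'=-x$ therefore yields
\[
d(x,-r)=d(-x,r)<|L_2(-x,r)|+\tfrac{2}{3}n^3=|L_2(x,-r)|+\tfrac{2}{3}n^3
\]
immediately, with no need to revisit the block decomposition or Subcase~4.2.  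Once you make this observation, your averaging argument goes through verbatim and coincides with the paper's proof.
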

\begin{proof}
We use ideas as in the proof of Proposition 3.12 in \cite{reiner-roichman}. Let $r\in\Red(w_0)$ be the reduced word defined above in the current section. Recall from Section~\ref{sub:background-hyperplane} that for any $x\in\Red(w_0)$ viewed as a gallery in the Coxeter arrangement, there exists an opposite gallery $-x\in\Red(w_0)$ such that $L_2=L_2(x,-x)=L_2(x,z)\sqcup L_2(-x,z)$ for any $z\in\Red(w_0)$ where $\sqcup$ means disjoint union. Thus, for any $x,y\in\Red(w_0)$, by the triangle inequality and Theorem~\ref{thm:Dn-close-accessible}, we have
\begin{align*}
2d(x,y)\leq& d(x,r)+d(r,y)+d(x,-r)+d(-r,y)\\
<& |L_2(x,r)|+|L_2(r,y)|+|L_2(x,-r)| + |L_2(-r,y)|+\frac{8}{3}n^3\\
=& \big(|L_2(x,r)|+|L_2(x,-r)|\big)+\big(|L_2(r,y)|+|L_2(-r,y)|\big)+\frac{8}{3}n^3\\
=&2|L_2|+\frac{8}{3}n^3,
\end{align*}
so $d(x,y)<|L_2|+\frac{4}{3}n^3$ for any $x,y\in\Red(w_0)$. Thus, $\diam(G(w_0))<|L_2|+\frac{4}{3}n^3$.
\end{proof}

\bibliographystyle{plain}
\bibliography{main}
\end{document}